\newtheorem{thm}[equation]{Theorem}
\numberwithin{equation}{section}
\newtheorem{lem}[equation]{Lemma}
\newtheorem{prop}[equation]{Proposition}
\begin{document}
\raggedbottom \voffset=-.7truein \hoffset=0truein \vsize=8truein
\hsize=6truein \textheight=8truein \textwidth=6truein
\baselineskip=18truept
\renewcommand\arraystretch{1.5}
\def\mapright#1{\ \smash{\mathop{\longrightarrow}\limits^{#1}}\ }
\def\ss{\smallskip}
\def\ssum{\sum\limits}
\def\dsum{{\displaystyle{\sum}}}
\def\la{\langle}
\def\ra{\rangle}
\def\on{\operatorname}
\def\u{\on{U}}
\def\lg{\on{lg}}
\def\a{\alpha}
\def\bz{{\Bbb Z}}
\def\eps{\epsilon}
\def\br{{\bold R}}
\def\bc{{\bold C}}
\def\bN{{\bold N}}
\def\nut{\widetilde{\nu}}
\def\tfrac{\textstyle\frac}
\def\product{\prod}
\def\b{\beta}
\def\G{\Gamma}
\def\g{\gamma}
\def\zt{{\Bbb Z}_2}
\def\zth{{\bold Z}_2^\wedge}
\def\bs{{\bold s}}
\def\bg{{\bold g}}
\def\bof{{\bold f}}
\def\bq{{\bold Q}}
\def\be{{\bold e}}
\def\line{\rule{.6in}{.6pt}}
\def\xb{{\overline x}}
\def\xbar{{\overline x}}
\def\ybar{{\overline y}}
\def\zbar{{\overline z}}
\def\ebar{{\overline \be}}
\def\nbar{{\overline n}}
\def\fbar{{\overline f}}
\def\Ubar{{\overline U}}
\def\et{{\widetilde e}}
\def\ni{\noindent}
\def\ms{\medskip}
\def\ahat{{\hat a}}
\def\bhat{{\hat b}}
\def\chat{{\hat c}}
\def\nbar{{\overline{n}}}
\def\minp{\min\nolimits'}
\def\N{{\Bbb N}}
\def\Z{{\Bbb Z}}
\def\Q{{\Bbb Q}}
\def\R{{\Bbb R}}
\def\C{{\Bbb C}}
\def\el{\ell}
\def\o{\on{o}}
\def\lg{\on{lg}}
\def\dstyle{\displaystyle}
\def\ds{\dstyle}
\def\Remark{\noindent{\it  Remark}}
\title[$p$-adic Stirling numbers]
{$p$-adic Stirling numbers of the second kind}
\author{Donald M. Davis}
\address{Department of Mathematics, Lehigh University\\Bethlehem, PA 18015, USA}
\email{dmd1@lehigh.edu}
\date{July 29, 2013}

\keywords{Stirling numbers,  $p$-adic integers, divisibility}
\thanks {2000 {\it Mathematics Subject Classification}:
11B73, 11A07.}

\maketitle
\begin{abstract} Let $S(n,k)$ denote the Stirling numbers of the second kind. We prove that the
$p$-adic limit of $S(p^ea+c,p^eb+d)$ as $e\to\infty$ exists for any integers $a$, $b$, $c$, and $d$
with $0\le b\le a$. We call the limiting $p$-adic integer $S(p^\infty a+c,p^\infty b+d)$.
When $a\equiv b$ mod $(p-1)$ or $d\le0$, we express them
in terms of $p$-adic binomial coefficients
$\binom{p^\infty\a-1}{p^\infty\b}$  introduced in a recent paper.
\end{abstract}
\section{Main theorems}\label{intro}
In \cite{Dinf}, the author defined, for integers $a$, $b$, $c$, and $d$, with $0\le b\le a$, $\binom{p^\infty a+c}{p^\infty b+d}$
to be the $p$-adic integer which is the $p$-adic limit of $\binom{p^ea+c}{p^eb+d}$, and gave explicit formulas for these in terms
of rational numbers and $p$-adic integers which, if $p$ or $n$ is even, could be considered to be $\u_p((p^\infty n)!):=\ds\lim_e\u_p((p^en)!)$. Here and throughout,
$\nu_p(-)$ denotes the exponent of $p$ in an integer or rational number and $\u_p(n)=n/p^{\nu_p(n)}$ denotes the unit factor in $n$.
Here we do the same for Stirling numbers $S(n,k)$ of the second kind; i.e., we prove that the $p$-adic limit of $S(p^e a+c,p^e b+d)$
exists, and call it $S(p^\infty a+c,p^\infty b+d)$. If $a\equiv b$ mod $(p-1)$ or $d\le0$, we express these  explicitly in terms
of certain $\binom{p^\infty\a-1}{p^\infty\b}$ together with certain Stirling-like rational numbers.

We now list our four main theorems, which will be proved in Sections \ref{sec2} and \ref{sec4}.
Let $\Z_p$ denote the $p$-adic integers with the usual metric.
\begin{thm}\label{thm1} Let $p$ be a prime, and $a$, $b$, $c$, and $d$  integers with $0\le a\le b$. Then the $p$-adic limit of
$S(p^ea+c,p^eb+d)$ exists in $\Z_p$. We denote the limit as $S(p^\infty a+c,p^\infty b+d)$.
\end{thm}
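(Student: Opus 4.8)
The plan is to work from the column generating function $\sum_n S(n,k)x^n=x^k/P_k(x)$, with $P_k(x):=\prod_{j=1}^k(1-jx)$, so $S(n,k)=[x^{n-k}]P_k(x)^{-1}$. Two cases are easy. If $b=0$, then $k=d$ is eventually constant and $S(p^ea+c,d)=\tfrac1{d!}\sum_j(-1)^{d-j}\binom dj\,j^{\,p^ea+c}$ is a \emph{finite} sum of terms $j^{\,p^ea+c}$, each converging to $\omega(j)^a j^c$, so it converges. If $a=b$, then $n-k=c-d$ is bounded and $S(p^ea+c,p^eb+d)$ equals $S(N+c-d,N)$ at $N=p^eb+d$; since $N\mapsto S(N+m,N)$ is an integer‑valued polynomial for each $m\ge0$ (and vanishes for $m<0$), this is a fixed integer‑valued polynomial evaluated at $N=p^eb+d\to d$ in $\Z_p$, hence convergent. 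So assume $1\le b<a$. (I read the hypothesis as $0\le b\le a$, as in the abstract; under $a\le b$ one has $n\le k$ eventually, so $S=0$ unless $a=b$, already covered.)

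The main device is the factorization obtained by pulling the multiples of $p$ out of $1,\dots,k$ (each $pi$ contributing a factor $1-i(px)$): with $A_k(x):=\prod_{1\le j\le k,\,p\nmid j}(1-jx)$, and using $P_m(y)^{-1}=\sum_{\el\ge0}S(m+\el,m)y^\el$,
\[P_k(x)=A_k(x)\,P_{\lfloor k/p\rfloor}(px),\qquad P_k(x)^{-1}=A_k(x)^{-1}\sum_{\el\ge0}S(\lfloor k/p\rfloor+\el,\lfloor k/p\rfloor)\,p^\el x^\el .\]
Taking $k=p^eb+d$ (so $\lfloor k/p\rfloor=p^{e-1}b+\lfloor d/p\rfloor=:k_e'$) and $M_e:=(p^ea+c)-(p^eb+d)$, and extracting the coefficient of $x^{M_e}$,
\[S(p^ea+c,p^eb+d)=\sum_{\el\ge0}p^\el\,S(k_e'+\el,\,k_e')\cdot[x^{M_e-\el}]A_{p^eb+d}(x)^{-1}.\]
Each summand lies in $p^\el\Z_p$ (both factors are nonnegative integers), so the series converges $p$-adically and only $\el<N$ contribute modulo $p^N$; for those, $S(k_e'+\el,k_e')$ converges by the $a=b$ case since $k_e'\to\lfloor d/p\rfloor$ in $\Z_p$. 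Thus Theorem~\ref{thm1} reduces to: for each fixed $\el$, $[x^{M_e-\el}]A_{p^eb+d}(x)^{-1}$ converges in $\Z_p$.

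This last point is the crux and the main obstacle, because the exponent $M_e-\el$ grows with $e$, so coefficientwise control of $A_{p^eb+d}(x)^{-1}$ does not suffice. The plan is to exploit that $A_k(x)$ is very rigid modulo a fixed $p^s$: splitting $1,\dots,p^eb+d$ into $b$ runs of length $p^e$ (plus one of length $|d|$) and each into runs of length $p^s$, the congruences $mp^e+u\equiv u$ and $tp^s+w\equiv w\ (\mathrm{mod}\ p^s)$ give, for $e\ge 2s-1$,
\[A_{p^eb+d}(x)\equiv\Bigl(\textstyle\prod_{1\le w\le p^s,\,p\nmid w}(1-wx)\Bigr)^{bp^{e-s}}\prod_{1\le v\le d,\,p\nmid v}(1-vx)\pmod{p^s}\]
(with the obvious change if $d\le0$). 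As $\prod_{1\le w\le p^s,\,p\nmid w}(1-wx)\equiv1-x^{(p-1)p^{s-1}}\pmod p$, writing it as $(1-x^{(p-1)p^{s-1}})(1+pR(x))$ and using $(1+p\cdot(\text{series}))^{p^{e-s}}\equiv1\pmod{p^s}$ turns the big power into $(1-x^{(p-1)p^{s-1}})^{bp^{e-s}}$ modulo $p^s$; a Legendre–Kummer count of $\nu_p\binom{bp^{e-s}+i-1}{i}$ shows that, modulo $p^s$, this is a polynomial $\sum_q\beta_q(e)\,x^{(p-1)p^{e-s}q}$ whose coefficients are binomial coefficients of the shape $\binom{p^{E}M-1}{p^{E}j}$ with $E=e-2s+1$, $M=bp^{s-1}+j$, $0\le j\le M$ — exactly the objects shown to have $p$-adic limits in \cite{Dinf} — so each $\beta_q(e)$ stabilizes as $e\to\infty$. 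Hence
\[[x^{M_e-\el}]A_{p^eb+d}(x)^{-1}\equiv\sum_q\beta_q(e)\,l_{\,i_q(e)}\pmod{p^s},\qquad i_q(e):=p^{e-s}\bigl(p^s(a-b)-(p-1)q\bigr)+(c-d-\el),\]
where $q$ runs over the finitely many values with $i_q(e)\ge0$ and $(l_i)_{i\ge0}$ are the coefficients of $\prod_{1\le v\le d,\,p\nmid v}(1-vx)^{-1}\bmod p^s$ (a fixed rational function; similarly if $d\le0$), an eventually periodic sequence. Now $p\equiv1$ modulo every divisor of $p-1$, while $p^{e-s}\equiv0$ modulo any fixed power of $p$ for $e$ large, so $p^{e-s}\bigl(p^s(a-b)-(p-1)q\bigr)$, hence $i_q(e)$, is eventually constant modulo the period of $(l_i)$. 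Therefore each $l_{i_q(e)}\bmod p^s$ is eventually constant, the finite sum over $q$ is eventually constant, and hence so is $S(p^ea+c,p^eb+d)\bmod p^s$ — for every $s$, which proves the theorem. (The ``boundary'' $q$, with $i_q(e)=c-d-\el$ bounded, occurs exactly when $(p-1)\mid(a-b)$, i.e.\ $a\equiv b\pmod{p-1}$: the dichotomy of the later theorems.) I expect all the genuine difficulty to be in this last step — making the congruence for $A_{p^eb+d}\bmod p^s$ precise, invoking \cite{Dinf} for the stabilization of $\binom{p^EM-1}{p^Ej}$, and determining which $q$ contribute — uniformly in $s$.
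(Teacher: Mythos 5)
Your proposal is correct in outline but takes a genuinely different route from the paper. The paper does not prove Theorem \ref{thm1} in one stroke: it imports two congruences of Chan and Manna (Theorems \ref{CMthm} and \ref{CM2}), uses them to evaluate the limits explicitly when $a\equiv b\bmod(p-1)$ or $d\le0$ (Theorems \ref{thm2}--\ref{thm4}), and treats the remaining case $a\not\equiv b\bmod(p-1)$, $d>0$ separately in Section \ref{sec4} by expanding $S(\cdot,d)$ as an alternating sum of powers, applying Fermat's little theorem and a Vandermonde-type identity from \cite{Rior} to collapse the inner sum to $\binom{p^eb+R_p(e)(a-b)}{p^eb+\ell}$, and proving stabilization of those binomial coefficients in Lemma \ref{limlem}; general $c$ then follows by induction via (\ref{stirfor}). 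You instead factor the column generating function by extracting the multiples of $p$, reduce to the unit part $A_{p^eb+d}(x)^{-1}$, and exploit its rigidity mod $p^s$: by Kummer's theorem the $bp^{e-s}$-th power collapses to a series supported on exponents divisible by $(p-1)p^{e-s}$ with coefficients of the shape $\binom{p^EM-1}{p^Ej}$ handled in \cite{Dinf}, convolved with the eventually periodic coefficient sequence of a fixed rational function. This buys a single uniform existence proof, with no case split and no reliance on \cite{CM}, and your boundary-$q$ remark recovers the $a\equiv b\bmod(p-1)$ dichotomy; what it does not deliver is the closed-form evaluations that are the paper's real content. The one step you should not leave implicit is the periodicity claim: your ``eventually constant modulo the period'' argument needs not just that $(l_i)\bmod p^s$ is periodic but that its period divides a number of the form $mp^K$ with $m\mid(p-1)$. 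That is true --- the companion matrix of $\prod_{p\nmid v}(1-vx)$ has all eigenvalues in $\Z_p^\times$, so its reduction mod $p$ has order dividing $(p-1)p^{\lceil\log_p d\rceil}$, and raising to the power $p^{s-1}$ lifts this to $\Z/p^s$ --- but it requires an argument, since the partial-fraction coefficients of $V(x)^{-1}$ need not be $p$-integral. You are also right that the hypothesis in the statement should read $0\le b\le a$.
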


\begin{thm}\label{thm2} If $p$ is any prime and $0\le b\le a$, then $S(p^\infty a,p^\infty b)=0$ if $a\not\equiv b\mod(p-1)$, while
$$S(p^\infty a,p^\infty b)=\binom{p^\infty\frac{pa-b}{p-1}-1}{p^\infty\frac{p(a-b)}{p-1}}\text{ if }a\equiv b\mod(p-1).$$
\end{thm}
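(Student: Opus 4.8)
The plan is to work with the ordinary generating function $\sum_{n\ge k}S(n,k)x^{n-k}=\prod_{i=1}^k(1-ix)^{-1}$, so that $S(p^ea,p^eb)$ is the coefficient of $x^{p^e(a-b)}$ in $\Phi_e(x):=\prod_{i=1}^{p^eb}(1-ix)^{-1}$, and to show that, coefficientwise and with $p$-adic precision tending to $\infty$ as $e\to\infty$, one may replace $\Phi_e$ first by $F(x)^{-p^{e-1}b}$ where $F(x):=\prod_{r=1}^{p-1}(1-rx)$, and then by $(1-x^{p-1})^{-p^{e-1}b}$. Extracting the coefficient of $x^{p^e(a-b)}$ from $(1-x^{p-1})^{-p^{e-1}b}$ gives $0$ exactly when $(p-1)\nmid(a-b)$, and otherwise a single binomial coefficient which, on letting $e\to\infty$, is $\binom{p^\infty\frac{pa-b}{p-1}-1}{p^\infty\frac{p(a-b)}{p-1}}$ by the definition in \cite{Dinf}.

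The first replacement is the technical heart, and must be carried out by expanding the \emph{polynomial} $\prod_{i=1}^{p^eb}(1-ix)$ rather than $\Phi_e$ directly. Grouping the factors by residue mod $p$ and using $1-(pj+r)x=(1-rx)\bigl(1-pj\tfrac{x}{1-rx}\bigr)$, one has, with $K=p^{e-1}b$,
$$\prod_{i=1}^{p^eb}(1-ix)=\Bigl(\prod_{j=1}^{K}(1-pjx)\Bigr)\prod_{r=1}^{p-1}\Bigl[(1-rx)^{K}\prod_{j=1}^{K-1}\bigl(1-pj\tfrac{x}{1-rx}\bigr)\Bigr].$$
Each of the three kinds of factored product has the form $\sum_l(-p)^l e_l(1,\dots,M)\,y^l$ with $M\in\{K-1,K\}$ and $y\in\{x,\tfrac{x}{1-rx}\}\subseteq\Z_p[[x]]$; the key arithmetic input is that the elementary symmetric functions $e_l(1,\dots,M)$ (unsigned Stirling numbers of the first kind) are, for fixed $l$, polynomials in $K$ of degree $2l$, divisible by $K$, with denominators dividing $(2l)!$, whence $\nu_p\bigl(e_l(1,\dots,M)\bigr)\ge\max\bigl(0,(e-1)-2l\bigr)$. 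Weighed against the factor $p^l$, every term past the constant term $1$ acquires $p$-adic valuation at least $\delta(e):=\lfloor(e-1)/2\rfloor$. Multiplying these expansions out (the $(1-rx)^K$ have integer coefficients) yields $\prod_{i=1}^{p^eb}(1-ix)=F(x)^{K}+E_e(x)$ with every coefficient of $E_e$ of valuation $\ge\delta(e)$; inverting, and using $F(x)^{-K}\in\Z_p[[x]]$, gives $[x^N]\Phi_e\equiv[x^N]F(x)^{-K}\pmod{p^{\delta(e)}}$ for every $N$.

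For the second replacement, Wilson's theorem $(p-1)!\equiv-1\pmod p$ together with $e_i(1,\dots,p-1)\equiv0\pmod p$ for $1\le i\le p-2$ gives $F(x)=(1-x^{p-1})+p\,h(x)$ with $h\in\Z_p[x]$; expanding $F(x)^{-K}$ by the binomial series and using $\nu_p\bigl(\binom{-K}{t}p^t\bigr)=t+\nu_p\binom{K+t-1}{t}\ge t+(e-1)-(t-1)=e$ for $t\ge1$ shows $F(x)^{-K}\equiv(1-x^{p-1})^{-K}\pmod{p^e}$. Hence $S(p^ea,p^eb)\equiv[x^{p^e(a-b)}](1-x^{p-1})^{-p^{e-1}b}\pmod{p^{\delta(e)}}$. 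This coefficient vanishes unless $(p-1)\mid p^e(a-b)$, i.e.\ unless $a\equiv b\pmod{p-1}$; when $a\equiv b\pmod{p-1}$ it is $\binom{p^{e-1}b+\mu-1}{\mu}$ with $\mu=\tfrac{p^e(a-b)}{p-1}=p^{e-1}\tfrac{p(a-b)}{p-1}$, and since $p^{e-1}b+\mu=p^{e-1}\tfrac{pa-b}{p-1}$ this equals $\binom{p^{e-1}\frac{pa-b}{p-1}-1}{p^{e-1}\frac{p(a-b)}{p-1}}$. Letting $e\to\infty$, writing $\alpha=\tfrac{pa-b}{p-1}$ and $\beta=\tfrac{p(a-b)}{p-1}$ (so $0\le\beta\le\alpha$ since $a\ge b$), Theorem \ref{thm1} gives existence of the limit and the definition of $\binom{p^\infty\alpha-1}{p^\infty\beta}$ in \cite{Dinf} identifies it, finishing both cases.

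I expect the main obstacle to be the uniform estimate in the second paragraph: proving that the multiplicative discrepancy between $\prod_{i=1}^{p^eb}(1-ix)$ and $F(x)^{p^{e-1}b}$ has \emph{all} coefficients of valuation $\to\infty$ with $e$. The subtle point is that this fails if one works with $\Phi_e$ directly, since the residue-$r$ blocks then involve $S(l+K-1,K-1)$, divisible only by $K-1$, a $p$-adic unit; working with the polynomial $\prod(1-ix)$ brings in $e_l(1,\dots,K-1)$, divisible by $K=p^{e-1}b$, which is exactly what lets the $p^l$ factors win. Carefully bounding the crossover between the regimes where $p^l$ dominates and where $\nu_p(K)-2l$ dominates is what produces the precision $\delta(e)=\lfloor(e-1)/2\rfloor$.
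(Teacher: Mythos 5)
Your proof is correct, but it takes a genuinely different route from the paper. The paper's proof of this theorem is essentially a two-line specialization: it quotes the Chan--Manna congruences (Theorem \ref{CMthm}, i.e.\ \cite[4.2,5.2]{CM}), which give $S(n,p^mb)$ mod $p^m$ (mod $p^{m-1}$ for $p=2$) directly as the displayed binomial coefficient or as $0$, sets $n=p^ea$, $m=e$, and lets $e\to\infty$. You instead reprove a self-contained version of that congruence from the generating function $\prod_{i=1}^{p^eb}(1-ix)^{-1}$, replacing the polynomial $\prod(1-ix)$ by $F(x)^{p^{e-1}b}$ and then by $(1-x^{p-1})^{-p^{e-1}b}$; your key estimate $\nu_p(e_l(1,\dots,M))\ge\max(0,(e-1)-2l)$ for $M\in\{K-1,K\}$ is correct (one clean justification: $(2l)!\,e_l(1,\dots,M)$ is a degree-$2l$ polynomial in $M$ with integer coefficients vanishing at $M=0$ and $M=-1$, so it contributes a factor of $K$ at both $M=K$ and $M=K-1$, giving valuation at least $(e-1)-\nu_p((2l)!)\ge e-2l$), and the rest of the computation, including the identification $\binom{K+\mu-1}{\mu}=\binom{p^{e-1}\frac{pa-b}{p-1}-1}{p^{e-1}\frac{p(a-b)}{p-1}}$, checks out. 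What you lose relative to citing \cite{CM} is precision: your congruence holds only mod $p^{\lfloor(e-1)/2\rfloor}$ rather than mod $p^e$ (or $p^{e-1}$), which is immaterial for the limit statement here but would not suffice as a drop-in replacement in the later proofs (e.g.\ of Theorems \ref{thm3} and \ref{thm4}), where the full Chan--Manna modulus is used in further congruence chains. What you gain is a proof independent of \cite{CM}. One cosmetic slip: you should not invoke Theorem \ref{thm1} for existence of the limit, since in the paper that case of Theorem \ref{thm1} is deduced \emph{from} this theorem; but your own congruence against the convergent sequence $\binom{p^{e-1}\alpha-1}{p^{e-1}\beta}$ of \cite{Dinf} already gives existence, so nothing is actually missing.
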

\noindent These $p$-adic binomial coefficients are as introduced in \cite{Dinf}.

Let $|s(n,k)|$ denote the unsigned Stirling numbers of the first kind.\begin{thm}\label{thm3}
If $0\le b\le a$, then
$$S(p^\infty a+c,p^\infty b+d)=\begin{cases}0&d=0,\ c\ne0\\
0&d<0,\ c\ge0\\
|s(|d|,|c|)|S(p^\infty a,p^\infty b)&c<0,\ d<0.\end{cases}$$
\end{thm}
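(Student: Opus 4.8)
The plan is to strip the $d<0$ part of the statement off by a finite recursion and reduce the whole theorem to one vanishing assertion. First I would iterate the rewriting $S(n,k-1)=S(n+1,k)-kS(n,k)$ of the standard recursion $S(n+1,k)=kS(n,k)+S(n,k-1)$ — equivalently, read off in each degree the generating function identity $\sum_nS(n,k-\delta)x^n=x^{-\delta}\bigl(\prod_{i=k-\delta+1}^{k}(1-ix)\bigr)\sum_nS(n,k)x^n$ — to get, for $\delta:=|d|$ and all $e$ with $p^eb\ge\delta$,
$$S(p^ea+c,\,p^eb-\delta)=\sum_{r=0}^{\delta}(-1)^r\sigma_r(p^eb-\delta+1,\dots,p^eb)\,S(p^ea+c+\delta-r,\;p^eb),$$
where $\sigma_r$ is the $r$th elementary symmetric function. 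Letting $e\to\infty$, and using Theorem \ref{thm1} for the existence of every limit on the right, the $r$th coefficient tends to $(-1)^r\sigma_r(-\delta+1,\dots,-1,0)$: the $r=\delta$ term dies because one argument is $0$, and for $r<\delta$ one computes $\sigma_r(-\delta+1,\dots,-1,0)=\sigma_r(-1,\dots,-(\delta-1))=(-1)^r|s(\delta,\delta-r)|$. With $j=\delta-r$ this yields, for $d<0$,
$$S(p^\infty a+c,\;p^\infty b+d)=\sum_{j=1}^{|d|}|s(|d|,j)|\,S\bigl(p^\infty a+(c+j),\;p^\infty b\bigr).$$

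Granting this, all three cases follow from the single claim $(\star)$: for every $m\ne0$ with $0\le b\le a$ one has $S(p^\infty a+m,\,p^\infty b)=0$. Indeed, the row $d=0,\,c\ne0$ is $(\star)$ with $m=c$; if $d<0$ and $c\ge0$ then every summand above has $c+j\ge1$, so the sum is $0$ by $(\star)$; and if $c<0$ and $d<0$ the exponents $c+j$ run over $c+1,\dots,c+|d|$, so $(\star)$ annihilates every summand except the one with $c+j=0$, i.e.\ $j=|c|$, which occurs precisely when $1\le|c|\le|d|$ and then contributes $|s(|d|,|c|)|\,S(p^\infty a,p^\infty b)$ — and that is also correct when $|c|>|d|$, since then $|s(|d|,|c|)|=0$.

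It remains to prove $(\star)$, and for that I would use $S(n,k)=h_{n-k}(1,2,\dots,k)$ with $h$ the complete homogeneous symmetric function, so $S(p^ea+m,p^eb)=h_{p^e(a-b)+m}(1,\dots,p^eb)$. If $a=b$ the degree is the fixed integer $m$: it is $0$ outright when $m<0$, and when $m>0$ Newton's identities express $h_m$ as a rational polynomial, of bounded denominator and without constant term, in the power sums $p_j(1,\dots,p^eb)=\sum_{i\le p^eb}i^j$; each $p_j$ is a polynomial in $p^eb$ with no constant term (Faulhaber), hence $p$-adically $\to0$, so $h_m(1,\dots,p^eb)\to0$. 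If $a>b$ the degree is unbounded and an arithmetic input is required: I would start from $S(p^ea+m,p^eb)=\tfrac1{(p^eb)!}\sum_{i=1}^{p^eb}(-1)^{p^eb-i}\binom{p^eb}{i}i^{p^ea+m}$, discard the terms with $p\mid i$ (their valuation is at least $p^ea+m-\nu_p\bigl((p^eb)!\bigr)\ge p^e\bigl(a-\tfrac b{p-1}\bigr)+m\to\infty$, since $a>b$), and analyse the surviving sum $p$-adically, using $i^{p^e}\to\omega(i)$ and the limit formulas for $p$-adic binomial coefficients from \cite{Dinf} (which enter since $S$ is a $1/k!$-weighted sum of binomial-type terms); this re-expresses $S(p^\infty a+m,p^\infty b)$ through the same $\binom{p^\infty\a-1}{p^\infty\b}$-type quantities that govern Theorems \ref{thm1} and \ref{thm2}, and the vanishing for $m\ne0$ drops out of their explicit shape.

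The only real obstacle is $(\star)$ for $a>b$; everything else is bookkeeping. This case is \emph{not} reachable by the obvious recursions: raising the first argument, $S(\,\cdot+c,\,\cdot\,)=\sum_jc_{c,j}(\,\cdot\,)\,S(\,\cdot\,,\,\cdot-j)$, is exactly inverse to the lowering step above, so composing them produces only the tautology $S(p^\infty a+c,p^\infty b)=S(p^\infty a+c,p^\infty b)$, and no induction on $|c|$ or on $a-b$ closes up. One is therefore forced through the arithmetic of $i^{p^e}$ and of the quotients $\binom{p^eb}{i}/(p^eb)!$ — in effect, the work already invested in Theorem \ref{thm1} and in \cite{Dinf} — in order to see the large-degree symmetric function $h_{p^e(a-b)+m}(1,\dots,p^eb)$ collapse to $0$ in the $p$-adic limit.
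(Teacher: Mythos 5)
Your reduction of all three cases to the single claim $(\star)$ (that $S(p^\infty a+m,p^\infty b)=0$ for $m\ne0$) via the inverted recursion is correct, and it is essentially what the paper does: its proof of the third case iterates $S(p^ea-k-1,p^eb-\ell-1)=S(p^ea-k,p^eb-\ell)-(p^eb-\ell)S(p^ea-k-1,p^eb-\ell)$ and produces the same unsigned first-kind Stirling coefficients that you obtain from the elementary symmetric functions. Your treatment of $(\star)$ in the subcase $a=b$ (writing $S(p^eb+m,p^eb)=h_m(1,\dots,p^eb)$ and using Newton's identities plus Faulhaber) is a valid and genuinely different elementary argument for that subcase.

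The gap is exactly where you flag ``the only real obstacle'': $(\star)$ for $a>b$, which is the actual content of the theorem, and what you offer there is not a proof. Discarding the terms with $p\mid i$ is fine, but the surviving sum $\frac1{(p^eb)!}\sum_{p\nmid i}(-1)^{p^eb-i}\binom{p^eb}{i}i^{p^ea+m}$ has a number of summands growing with $e$, so ``$i^{p^e}\to\omega(i)$ plus the limit formulas from \cite{Dinf}'' cannot be applied termwise, and the assertion that ``the vanishing for $m\ne0$ drops out of their explicit shape'' is precisely the statement to be proved, not a derivation of it. Nor can you lean on ``the work already invested in Theorem \ref{thm1}'': in the paper, Theorem \ref{thm1} in this regime is deduced \emph{from} Theorems \ref{thm2}--\ref{thm4}, and in any case existence of a limit says nothing about its being $0$. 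The paper closes this case with two concrete inputs you would need to supply: the Chan--Manna congruence (Theorem \ref{CMthm}), which collapses $S(p^ea+c,p^eb)$ modulo $p^{e-1}$ (resp.\ $p^e$) to a single binomial coefficient such as $\binom{2^{e-1}a+k-2^{e-2}b-1}{2^{e-1}a+k-2^{e-1}b}$ when the relevant congruence on $c$ holds (and to $0$ otherwise), followed by a Legendre/Kummer digit-sum computation showing that for $c\ne0$ the valuation of that coefficient contains an uncancelled term $d_2(2^{e-2}b-1)=e-2+d_2(b-1)\to\infty$ (similarly for odd $p$, and separately for $c<0$), whereas for $c=0$ the growing terms cancel and one gets the nonzero value of Theorem \ref{thm2}. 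Without that quantitative step the central case of the theorem remains unproven.
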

\noindent In particular, if $a\not\equiv b$ mod $(p-1)$, then $S(p^\infty a+c,p^\infty b+d)=0$ whenever $d\le0$.

For any prime number $p$, integer $n$, and nonnegative integer $k$, define the partial Stirling numbers $T_p(n,k)$  (\cite{D}) by
\begin{equation}T_p(n,k)=\frac{(-1)^{k}}{k!}\sum_{i\not\equiv0\ (p)}(-1)^i\binom ki i^n.\label{Tdef}\end{equation}
\begin{thm}\label{thm4} If $a\equiv b\mod(p-1)$ and $d\ge 1$, then
$$S(p^\infty a+d-1,p^\infty b+d)= T_p(d-1,d) \binom{p^\infty\frac{pa-b}{p-1}-1}{p^\infty b}.$$
\end{thm}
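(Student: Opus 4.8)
The plan is to reduce the statement to a fact about the partial Stirling numbers $T_p$, and then to match the resulting $p$-adic limit with the explicit formula for $\binom{p^\infty\alpha-1}{p^\infty\beta}$ from \cite{Dinf}. Write $N=\tfrac{pa-b}{p-1}$, an integer because $a\equiv b\pmod{p-1}$; note $N-b=\tfrac{p(a-b)}{p-1}\ge0$, so $0\le b\le N$. From $S(n,k)=\tfrac{(-1)^k}{k!}\sum_{i=0}^k(-1)^i\binom ki i^n$ and separating the indices with $p\mid i$, one obtains the identity
$$S(n,k)=T_p(n,k)+\frac{(-1)^kp^n}{k!}\ssum_{j\ge1}(-1)^{pj}\binom k{pj}j^n.$$
I would apply this with $n=p^ea+d-1$, $k=p^eb+d$. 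The displayed sum is an ordinary integer, hence has nonnegative $p$-adic valuation, while by Legendre's formula $\nu_p(p^n/k!)=n-\nu_p\bigl((p^eb+d)!\bigr)=p^e\cdot\tfrac{a(p-1)-b}{p-1}+O(1)$; since $0\le b\le a$ gives $a(p-1)\ge a\ge b$, this tends to $\infty$ whenever $a\ge1$, the sole exception $p=2,\ a=b$ being a case where $p^eb+d>p^ea+d-1$ forces both $S(p^ea+d-1,p^eb+d)=0$ and $\binom{p^eN-1}{p^eb}=0$. Hence, for $a\ge1$,
$$S(p^\infty a+d-1,p^\infty b+d)=\lim_{e\to\infty}T_p(p^ea+d-1,p^eb+d),$$
the limit existing by Theorem \ref{thm1}; the case $a=0$ is degenerate.

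I would then bring in the uniform congruence $i^{p^ea}\equiv\omega(i)^a\pmod{p^{e+1}}$, valid for every $i\ge1$, where $\omega$ is the Teichm\"uller character and $\omega(i):=0$ when $p\mid i$; it follows from $\omega(i)^{p^e}=\omega(i)$ together with $x^{p^e}\equiv1\pmod{p^{e+1}}$ for $x\in1+p\Z_p$, and from $p^{p^ea}\equiv0$ when $p\mid i$. This is where $a\equiv b\pmod{p-1}$ enters: since $\omega(i)^{p-1}=1$ we have $\omega(i)^a=\omega(i)^b$, so in the sum defining $T_p(p^ea+d-1,p^eb+d)$ the factor $i^{p^ea+d-1}$ may, modulo $p^{e+1}$, be replaced by $\omega(i)^b\,i^{d-1}$, which no longer involves $a$. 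When $b=0$ this sum has fixed length $d$, the replacement is innocuous, and one gets at once $S(p^\infty a+d-1,d)=T_p(d-1,d)$, consistent with $\binom{p^\infty N-1}{p^\infty0}=1$.

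For $b\ge1$ the real difficulty surfaces: dividing by $(p^eb+d)!$ magnifies the $p^{e+1}$ errors in these congruences past all use, so a termwise argument is impossible. I would instead mimic \cite{Dinf}: after the Teichm\"uller reduction, group the indices $i$ by residue mod $p$, write the alternating sum in terms of sectioned sums of the binomial coefficients $\binom{p^eb+d}{\,\cdot\,}$, and use Kummer's theorem on carries to show that everything but one controlled main term is negligible in the limit. That main term should, after comparison with the closed form for $\binom{p^\infty N-1}{p^\infty b}$ in \cite{Dinf} (equivalently, for $b\ge1$: by Theorem \ref{thm2} and the identity $\binom{p^eN-1}{p^eb}=\binom{p^eN-1}{p^e(N-b)-1}$ one has $\binom{p^\infty N-1}{p^\infty b}=\tfrac{N-b}{b}S(p^\infty a,p^\infty b)$), assemble to $T_p(d-1,d)\binom{p^\infty N-1}{p^\infty b}$. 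A useful auxiliary device, which never divides by $(p^eb+d)!$, is the recursion $T_p(n+1,k)=kT_p(n,k)+T_p(n,k-1)$ --- satisfied by $T_p$ because the omitted $i\equiv0$ part satisfies it too --- which lets one propagate data in the first argument and relate consecutive values of $d$.

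The main obstacle is exactly this $b\ge1$ estimate: controlling $\nu_p$ of the partial alternating sums $\sum_{p\nmid i}(-1)^i\binom{p^eb+d}{i}\,\omega(i)^b\,i^{d-1}$ finely enough to survive division by $(p^eb+d)!$ and to pin down the main term. By contrast the passage $S\rightsquigarrow T_p$, the case $b=0$, and the final identification against \cite{Dinf} and Theorem \ref{thm2} are comparatively routine.
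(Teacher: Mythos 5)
Your reduction of $S(p^ea+d-1,p^eb+d)$ to $T_p(p^ea+d-1,p^eb+d)$ is correct (the correction term $\frac{(-1)^kp^n}{k!}\sum_j(\pm)\binom{k}{pj}j^n$ does have valuation at least $n-\nu_p(k!)\to\infty$ outside the degenerate cases you flag), and the $b=0$ case goes through. But the theorem's content is the case $b\ge1$, and there your proposal stops at exactly the point you yourself identify as ``the main obstacle'': controlling the alternating sums $\sum_{p\nmid i}(-1)^i\binom{p^eb+d}{i}\omega(i)^b i^{d-1}$ after division by $(p^eb+d)!$. ``Mimic \cite{Dinf} and use Kummer's theorem on carries'' is not an argument here --- the analysis in \cite{Dinf} is for binomial coefficients $\binom{p^ea+c}{p^eb+d}$ themselves, not for these weighted sectioned sums, and nothing in your sketch explains why a single main term survives or why it equals $T_p(d-1,d)\binom{p^\infty N-1}{p^\infty b}$ rather than something else. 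This is a genuine gap, not a routine verification. A secondary circularity: you invoke Theorem \ref{thm1} for the existence of the limit, but in the regime $a\equiv b\bmod(p-1)$, $d>0$ the paper \emph{derives} Theorem \ref{thm1} from Theorem \ref{thm4}, so existence must be established along with the evaluation.

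The paper sidesteps your obstacle entirely by a different decomposition: the Chan--Manna congruence (Theorem \ref{CM2}) writes $S(n,p^eb+d)$ mod $p^e$ as a convolution $\sum_j S(p^eb+(p-1)j,p^eb)\,S(n-p^eb-(p-1)j,d)$, so the ``large'' lower index $p^eb$ is isolated in factors that Theorem \ref{CMthm} converts to binomial coefficients, while the partial Stirling numbers only ever appear with the \emph{small} second argument $d$ --- no division by $(p^eb+d)!$ ever occurs. The two remaining ingredients are the stabilization estimate $\nu_p\bigl(T_p((p-1)k+d-1,d)-T_p(d-1,d)\bigr)\ge\nu_p(k)-\lg_p(d)$ (Lemma \ref{Tdiff}), which lets the factor $T_p(d-1,d)$ be pulled out of the sum, and the hockey-stick identity (Lemma \ref{lem2}), which collapses the remaining sum of binomial coefficients into the single coefficient converging to $\binom{p^\infty\frac{pa-b}{p-1}-1}{p^\infty b}$. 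Without the Chan--Manna factorization or an equivalent substitute, your direct attack on the alternating-sum formula would essentially require reproving their results from scratch, and that work is absent from the proposal.
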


When $a\equiv b$ mod $(p-1)$, results for all $S(p^\infty a+c,p^\infty b+d)$ with $d>0$ follow from these results and the standard formula
\begin{equation}\label{stirfor}S(n,k)=kS(n-1,k)+S(n-1,k-1).\end{equation}
Explicit formulas are somewhat complicated and are relegated to Section \ref{messysec}.
In Section \ref{sec5} we briefly mention
another version of $p$-adic Stirling numbers of the second kind.

\section{Proofs when $a\equiv b$ mod $(p-1)$ or $d\le0$}\label{sec2}
In this section, we prove Theorems  \ref{thm2}, \ref{thm3}, and \ref{thm4}.
If $a\equiv b$ mod $(p-1)$ or $d\le0$, Theorem \ref{thm1} follows immediately from Theorems \ref{thm2}, \ref{thm3},
 and \ref{thm4} and their proofs. These give explicit values for the limits when $d\le0$ and for at least
one value of $c$ when $d>0$. The existence of the limit for other values of $c$ follows from (\ref{stirfor})
and induction. We will prove Theorem \ref{thm1} when $a\not\equiv b$ mod $(p-1)$ and $d>0$ in Section \ref{sec4}.

We rely heavily on the following two results of Chan and Manna.
\begin{thm}$($\cite[4.2,5.2]{CM}$)$ Suppose $n> p^mb$ with $m\ge3$ if $p=2$. Then, mod $p^{m-1}$ if $p=2$, and mod $p^m$ if $p$ is odd,
$$S(n,p^mb)\equiv\begin{cases}\binom{n/2-2^{m-2}b-1}{n/2-2^{m-1}b}&\text{if $p=2$ and $n\equiv0 \mod 2$}\\
\binom{(n-p^{m-1}b)/(p-1)-1}{(n-p^mb)/(p-1)}&\text{if $p$ is odd and $n\equiv b\mod (p-1)$}\\
0&\text{otherwise.}\end{cases}$$\label{CMthm}
\end{thm}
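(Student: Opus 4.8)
To establish this congruence the natural tool is the generating function
$$\sum_{n\ge p^mb}S(n,p^mb)\,x^n=\frac{x^{p^mb}}{\prod_{i=1}^{p^mb}(1-ix)}\in\Z_p[[x]],$$
whose denominator has constant term $1$ and is therefore invertible; the task is to compute this power series modulo $p^m$ (modulo $p^{m-1}$ when $p=2$) and read off the coefficient of $x^n$. So the plan is to simplify $\prod_{i=1}^{p^mb}(1-ix)$ modulo the relevant power of $p$.

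First I would separate the factors with $p\nmid i$ from those with $p\mid i$. In the second group, setting $i=pj$ gives $\prod_{j=1}^{p^{m-1}b}\bigl(1-j\,(px)\bigr)$, which is the same kind of product with $m$ decreased by $1$ and $x$ replaced by $px$; since $(px)^s\equiv0\pmod{p^m}$ for $s\ge m$, only finitely many low-order coefficients of $\prod_{j=1}^{p^{m-1}b}(1-jy)$ matter, and (for $s\ge1$) each is needed only modulo a smaller power of $p$, so this group is controlled by induction on $m$. In the first group the factor $1-ix$ depends only on $i\bmod p^m$, and among the integers $1\le i\le p^mb$ coprime to $p$ each residue class in $(\Z/p^m)^\times$ occurs exactly $b$ times, so modulo $p^m$ this group equals $P_m(x)^b$ where $P_m(x):=\prod_{r\in(\Z/p^m)^\times}(1-rx)$.

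The crux is thus to evaluate $P_m(x)$ modulo $p^m$. Modulo $p$ one has $P_m(x)\equiv(1-x^{p-1})^{p^{m-1}}$, using $\prod_{r=1}^{p-1}(1-rx)\equiv1-x^{p-1}\pmod p$ (the nonzero residues being the roots of $T^{p-1}-1$) together with the fact that there are $p^{m-1}$ units above each nonzero residue mod $p$; lifting this to modulus $p^m$ is done by grouping those $p^{m-1}$ lifts of a given residue, expanding, and using that the elementary symmetric functions of $\{0,1,\dots,p^{m-1}-1\}$ vanish to high $p$-adic order. Granting the resulting clean form of $P_m$, one gets $x^{p^mb}/P_m(x)^b\equiv x^{p^mb}(1-x^{p-1})^{-p^{m-1}b}=\sum_{j\ge0}\binom{p^{m-1}b+j-1}{j}x^{p^mb+(p-1)j}$ modulo $p^m$, so the coefficient of $x^n$ vanishes unless $n\equiv b\pmod{p-1}$; and when $j=(n-p^mb)/(p-1)$ the identity $p^{m-1}b+j-1=(n-p^{m-1}b)/(p-1)-1$ turns this into the stated binomial coefficient. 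For $p=2$ the same outline applies, but $(\Z/2^m)^\times\cong\Z/2\times\Z/2^{m-2}$ for $m\ge3$: this is the source of the hypothesis $m\ge3$, it forces the loss of one power of $2$ in the modulus, and tracking the resulting $x^2$-periodicity produces the shifted coefficient $\binom{n/2-2^{m-2}b-1}{n/2-2^{m-1}b}$ for even $n$ and $0$ for odd $n$.

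I expect the main obstacle to be precisely the mod-$p^m$ (as opposed to mod-$p$) evaluation of $P_m(x)$ and the carry bookkeeping when this is combined with the inductive contribution from the factors with $p\mid i$; this is exactly the point at which the odd/even dichotomy, and the weaker modulus in the case $p=2$, come in, and it is presumably where the bulk of Chan and Manna's argument is spent.
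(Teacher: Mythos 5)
First, note that the paper does not prove this statement: it is Theorems 4.2 and 5.2 of Chan and Manna \cite{CM}, quoted as an external input to Sections \ref{sec2} and \ref{sec4}. So there is no in-paper argument to compare yours against, and I can only assess the proposal on its own terms. Your starting point --- the generating function $x^{k}/\prod_{i=1}^{k}(1-ix)$ with $k=p^mb$, the separation of the denominator into factors with $p\nmid i$ (each unit class mod $p^m$ occurring exactly $b$ times, giving $P_m(x)^b$) and factors with $p\mid i$, and the final bookkeeping showing that the coefficient of $x^n$ in $x^{p^mb}(1-x^{p-1})^{-p^{m-1}b}$ is exactly the stated binomial coefficient and vanishes for $n\not\equiv b\pmod{p-1}$ --- is sound, and is essentially the route Chan and Manna take.

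But as it stands the proposal has a genuine gap, which you flag yourself: the entire content of the theorem is the congruence $\prod_{i=1}^{p^mb}(1-ix)\equiv(1-x^{p-1})^{p^{m-1}b}\pmod{p^m}$ for odd $p$ (and its weaker mod~$2^{m-1}$ analogue with exponent $2^{m-2}b$), and you establish it only modulo $p$, writing ``granting the resulting clean form of $P_m$'' for the lift. Two quantitative estimates are needed and neither is supplied: (i) to collapse the $p^{m-1}$ lifts $r,r+p,\dots,r+(p^{m-1}-1)p$ of a fixed residue into $(1-rx)^{p^{m-1}}$ you need $p^{s}e_s(0,1,\dots,p^{m-1}-1)\equiv0\pmod{p^m}$ for all $s\ge1$; (ii) to discard the factors with $p\mid i$ you need $\prod_{j=1}^{p^{m-1}b}(1-jpx)\equiv1\pmod{p^m}$, and your ``induction on $m$'' for this still requires lower bounds on $\nu_p$ of the relevant elementary symmetric functions (equivalently of the power sums $\sum_{j\le p^{m-1}b}j^{t}$), which is the same difficulty in another guise. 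Both estimates are true, but the margin is tight --- already for $s=1$ one gets $\nu_p$ exactly $m$, and for $s$ with $\nu_p(s!)$ large the count must be done carefully --- and for $p=2$ the same computation must be shown to lose exactly one power of $2$, which is where the modulus $2^{m-1}$ and the hypothesis $m\ge3$ have to be extracted rather than merely motivated by the shape of $(\Z/2^m)^\times$. Until those bounds are proved you have an accurate roadmap of \cite{CM}, not a proof.
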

\begin{thm}$($\cite[4.3,5.3]{CM}$)$ Let $p$ be any prime, and suppose $n\ge p^eb+d$. Then
$$S(n,p^eb+d)\equiv\sum_{j\ge0}S(p^eb+(p-1)j,p^eb)S(n-p^eb-(p-1)j,d)\mod{p^e}.$$\label{CM2}
\end{thm}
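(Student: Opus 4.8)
The plan is to derive the congruence from the elementary Stirling recursion (\ref{stirfor}) by a double induction, the only substantive outside input being the vanishing assertion already contained in Theorem~\ref{CMthm}. Write
$$B(n,d):=\sum_{j\ge0}S(p^eb+(p-1)j,p^eb)\,S(n-p^eb-(p-1)j,d);$$
it suffices to prove $S(n,p^eb+d)\equiv B(n,d)\pmod{p^e}$ for all $n\ge0$ and all $d\ge0$ (both sides vanish when $d<0$ or $n<p^eb+d$, so nothing is lost in restricting to this range).

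First I would record that $B$ satisfies, for $d\ge1$ and every $n$, the recursion $B(n,d)=dB(n-1,d)+B(n-1,d-1)$. This is purely formal: apply (\ref{stirfor}) to the factor $S(n-p^eb-(p-1)j,d)$ in each summand and split the sum, using the usual convention that $S$ vanishes on a negative first argument so that the boundary terms behave. On the other side, applying (\ref{stirfor}) to $S(n,p^eb+d)$ and using $p^eb+d\equiv d\pmod{p^e}$ gives $S(n,p^eb+d)\equiv dS(n-1,p^eb+d)+S(n-1,p^eb+(d-1))\pmod{p^e}$ for $d\ge1$ and every $n$. Subtracting, the error $D(n,d):=S(n,p^eb+d)-B(n,d)$ obeys $D(n,d)\equiv dD(n-1,d)+D(n-1,d-1)\pmod{p^e}$ for $d\ge1$, while $D(n,d)=0$ outright whenever $n\le0$.

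The base case $d=0$ is where Theorem~\ref{CMthm} is used. Since $S(m,0)$ vanishes unless $m=0$, the sum defining $B(n,0)$ collapses: $B(n,0)=S(n,p^eb)$ when $n-p^eb$ is a nonnegative multiple of $p-1$, and $B(n,0)=0$ otherwise. When $n-p^eb$ is such a multiple, or when $n<p^eb$ (both terms zero), we get $D(n,0)=0$ exactly; in the remaining case $n>p^eb$ with $n\not\equiv b\pmod{p-1}$ we have $B(n,0)=0$ and Theorem~\ref{CMthm} (with $m=e$) gives $S(n,p^eb)\equiv0\pmod{p^e}$ --- and this case simply does not arise when $p=2$, so that theorem's restriction there is irrelevant. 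Thus $D(n,0)\equiv0\pmod{p^e}$ for all $n$. Now induct on $d$: granting $D(\,\cdot\,,d-1)\equiv0\pmod{p^e}$, the recursion for $D$ reduces to $D(n,d)\equiv dD(n-1,d)\pmod{p^e}$, and since $D(n,d)=0$ for $n\le0$ an inner induction on $n$ yields $D(n,d)\equiv0\pmod{p^e}$ for every $n$, completing the proof.

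I expect the main obstacle to be exactly the single-parameter input $S(n,p^eb)\equiv0\pmod{p^e}$ for $n\not\equiv b\pmod{p-1}$ --- that is, controlling the $p$-adic size of $S(n,p^eb)$ for general $n$ --- but this is precisely Theorem~\ref{CMthm} and is handed to us, after which propagating the identity to all $d$ is routine bookkeeping with (\ref{stirfor}). (Alternatively one could start from the exponential-generating-function convolution $\binom{p^eb+d}{d}S(n,p^eb+d)=\sum_i\binom ni S(i,p^eb)S(n-i,d)$ and reduce modulo $p^e$; but then the binomial factors $\binom ni$ and $\binom{p^eb+d}{d}$ must be disposed of, which the inductive argument above avoids entirely.)
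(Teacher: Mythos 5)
The paper does not prove this statement at all: it is quoted verbatim from Chan and Manna \cite[4.3,5.3]{CM}, so there is no in-paper argument to compare yours against. Judged on its own, your proof is correct. The double induction is sound: the formal recursion $B(n,d)=dB(n-1,d)+B(n-1,d-1)$ does follow from applying (\ref{stirfor}) to the second factor of each summand (with the convention that $S$ vanishes on negative first arguments, which makes the boundary terms behave), the congruence $S(n,p^eb+d)\equiv dS(n-1,p^eb+d)+S(n-1,p^eb+(d-1))\pmod{p^e}$ is immediate, and the base case $d=0$ correctly isolates the only genuinely arithmetic input, namely $S(n,p^eb)\equiv0\pmod{p^e}$ for $n>p^eb$ with $n\not\equiv b\pmod{p-1}$, which is the ``otherwise'' clause of Theorem \ref{CMthm}; your observation that this case is vacuous for $p=2$ (so the weaker modulus $2^{m-1}$ and the restriction $m\ge3$ there are harmless) is the right way to dispose of that wrinkle. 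This is a genuinely different and more elementary route than the source: Chan and Manna obtain both Theorems \ref{CMthm} and \ref{CM2} by working with the ordinary generating function $\sum_{n\ge k}S(n,k)x^{n-k}=\prod_{i=1}^{k}(1-ix)^{-1}$ and factoring the product modulo $p^e$ according to residues of $i$, whereas you need only the recursion (\ref{stirfor}) once the single vanishing statement is granted. What the generating-function approach buys is that it proves Theorem \ref{CMthm} and the convolution simultaneously and explains where the arithmetic progression $(p-1)j$ comes from; what your approach buys is brevity and the clean separation of the one nontrivial $p$-adic estimate from pure bookkeeping.
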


\begin{proof}[Proof of Theorem \ref{thm2}] The result follows from Theorem \ref{CMthm}.  If $p$ is odd and $a\not\equiv b$ mod $(p-1)$, then
$\nu_p(S(p^ea,p^eb))\ge e$, while if $a\equiv b$ mod $(p-1)$, then $$S(p^ea,p^eb)\equiv\binom{p^{e-1}\frac{pa-b}{p-1}-1}{p^{e-1}\frac{p(a-b)}{p-1}}\mod {p^e}.$$
If $p=2$, then
$$S(2^ea,2^eb)\equiv\binom{2^{e-2}(2a-b)-1}{2^{e-2}(2a-2b)}\mod {2^{e-1}}.$$
\end{proof}

Let $d_p(n)$ denote the sum of the digits in the $p$-ary expansion of a positive integer $n$.
\begin{proof} [Proof of Theorem \ref{thm3}]
The first case  follows readily  Theorem \ref{CMthm}. If $p=2$, this says that $\nu(S(2^ea+c,2^eb))\ge e-1$ if $c$ is odd, while if $c=2k$ is even, then, mod $2^{e-1}$,
$$S(2^ea+2k,2^eb)\equiv\binom{2^{e-1}a+k-2^{e-2}b-1}{2^{e-1}a+k-2^{e-1}b}.$$
If $0<k<2^{e-1}$, this has 2-exponent
$$\nu_2=d_2(a-b)+d_2(k)-(d_2(2a-b)+d_2(k-1))+d_2(2^{e-2}b-1)\to\infty$$
as $e\to\infty$, while if $k=-\ell<0$, then
$$\nu_2=e-1+d_2(a-b-1)-d_2(\ell-1)-(e-2+d_2(2a-b-1)-d_2(\ell))+d_2(2^{e-2}b-1)\to\infty.$$
The odd-primary case  follows similarly.

The second case of the theorem follows from the result for $c=0$  just established and (\ref{stirfor}) by induction.
For the third case, write $c=-k$ and $d=-\ell$ and argue by induction on $k$ and $\ell$, starting with the fact that the result is true if $k=0$ or $l=0$.
Then, mod $p^e$,
\begin{eqnarray*}S(p^ea-k-1,p^eb-\ell-1)&=&S(p^ea-k,p^eb-\ell)-(p^eb-\ell)S(p^ea-k-1,p^eb-\ell)\\
&\equiv&S(p^ea,p^eb)(|s(\ell,k)|+\ell |s(\ell,k+1)|)\\
&=&S(p^ea,p^eb)|s(\ell+1,k+1)|,\end{eqnarray*}
implying the result.\end{proof}

The proof of Theorem \ref{thm4} will utilize the following two lemmas. We let $\lg_p(x)=[\log_p(x)]$.
\begin{lem}\label{Tdiff} If $p$ is any prime and $k$ and $d$ are positive integers, then
$$\nu_p\bigl(T_p((p-1)k+d-1,d)-T_p(d-1,d)\bigr)\ge\nu_p(k)-\lg_p(d).$$
\end{lem}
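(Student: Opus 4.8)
The plan is to work directly from the defining formula (\ref{Tdef}) and study the difference
$T_p((p-1)k+d-1,d)-T_p(d-1,d)$
as a single sum over $i\not\equiv0\ (p)$ with summand
$\frac{(-1)^{d+i}}{d!}\binom di i^{d-1}\bigl(i^{(p-1)k}-1\bigr)$.
The factor $i^{(p-1)k}-1$ is the key: for each $i$ prime to $p$, Fermat/Euler gives $i^{p-1}\equiv1\pmod p$, and more precisely $\nu_p\bigl(i^{(p-1)k}-1\bigr)\ge 1+\nu_p(k)$ when $p$ is odd (and an analogous, slightly shifted bound when $p=2$, which is why the $p=2$ case of Theorem \ref{CMthm} loses a power). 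So termwise the summand already has $p$-adic valuation at least $\nu_p(k)$ plus whatever valuation $\frac1{d!}\binom di i^{d-1}$ contributes — but that rational coefficient can be \emph{negative} in valuation because of the $d!$ in the denominator, and that is exactly where the $-\lg_p(d)$ correction comes from.

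First I would record the arithmetic fact about $\nu_p(i^{(p-1)k}-1)$: writing $i^{p-1}=1+pt$ with $t$ an integer (resp. $i^{2}=1+4t$ when $p=2$, using that odd $i$ has $i^2\equiv1\pmod 8$), the identity $(1+pt)^k-1 = \sum_{j\ge1}\binom kj (pt)^j$ shows $\nu_p\bigl((1+pt)^k-1\bigr)\ge 1+\nu_p(k)$ for $p$ odd, since $\nu_p\bigl(\binom kj p^j\bigr)\ge j+\nu_p(k)-\nu_p(j!)\ge 1+\nu_p(k)$ for all $j\ge1$. (One checks the $j\ge2$ terms are no worse using $\nu_p(j!)\le (j-1)/(p-1)\le j-1$.) For $p=2$ one gets $\nu_2\bigl(i^{2k}-1\bigr)\ge 3+\nu_2(k)$, which more than compensates, so the stated bound holds uniformly; I would remark on this rather than split into a long separate case.

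Next I would bound the denominator contribution. The quantity $\frac1{d!}\binom di i^{d-1}=\frac{i^{d-1}}{i!(d-i)!}$ has $\nu_p\ge -\nu_p(d!)$, and the classical estimate $\nu_p(d!)=\frac{d-d_p(d)}{p-1}\le \lg_p(d)+ (\text{bounded stuff})$ is too weak by itself; the sharper route is to sum over $i$ and use that $T_p(d-1,d)$ is itself a $p$-integer up to a controlled denominator. Concretely, I would instead use the finite-difference reading of (\ref{Tdef}): $k!\,T_p(d-1,d)$ (with $k=d$ here) is, up to sign, the sum of $i^{d-1}$ over $i\not\equiv0$ weighted by $\binom di$, which is an integer combination; dividing by $d!$ costs at most $\nu_p(d!)$, but since we are taking a \emph{difference} and each summand already carries $1+\nu_p(k)$ extra powers of $p$, the net valuation is $\ge (1+\nu_p(k))-\nu_p(d!)$. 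Finally I would convert $\nu_p(d!)$ to the clean form in the statement: since $d<p^{\lg_p(d)+1}$ one has $\nu_p(d!)\le \frac{d-1}{p-1}\le \lg_p(d)\cdot\frac{p-1}{p-1}$ after a short induction, giving $\nu_p(d!)\le \lg_p(d)$ — actually the cleanest inequality $\nu_p(d!)\le\lg_p(d!)\le \lg_p(d)$ needs care, so I would just cite $\nu_p(d!)\le\sum_{j\ge1}\lfloor d/p^j\rfloor\le \lfloor d/(p-1)\rfloor$ and observe $1-\lfloor d/(p-1)\rfloor\ge -\lg_p(d)$ for all $d\ge1$, which is an elementary check.

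The main obstacle is the bookkeeping in the last step: matching the crude bound $(1+\nu_p(k))-\nu_p(d!)$ against the target $\nu_p(k)-\lg_p(d)$ requires the inequality $\nu_p(d!)\le \lg_p(d)+1$, which is true but just barely, and for $p=2$ one must also verify that the extra power gained from $\nu_2(i^{2k}-1)\ge 3+\nu_2(k)$ really does cover the $\nu_2(d!)$ loss without degrading the bound. I expect this to come down to a one-line estimate on $\nu_p(d!)$ together with the observation that the worst-case $i$ (those with $\nu_p(i!(d-i)!)$ maximal) do not occur simultaneously with the worst-case behavior of $i^{(p-1)k}-1$; if needed, one can sharpen by grouping the sum over residue classes mod $p^{\lg_p(d)+1}$ so that the $\binom di$ Vandermonde-type cancellation supplies the missing power. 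Everything else is routine manipulation of (\ref{Tdef}).
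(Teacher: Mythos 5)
There is a genuine gap, and it sits exactly where you flag the "main obstacle." Your core estimate is termwise: each summand gains $1+\nu_p(k)$ from $\nu_p\bigl(i^{(p-1)k}-1\bigr)$ and loses at most $\nu_p(d!)$ from the denominator, so you need $(1+\nu_p(k))-\nu_p(d!)\ge\nu_p(k)-\lg_p(d)$, i.e.\ $\nu_p(d!)\le\lg_p(d)+1$. That inequality is false for essentially all $d$: $\nu_p(d!)=\tfrac{d-d_p(d)}{p-1}$ grows linearly in $d$ while $\lg_p(d)$ grows logarithmically (already $p=2$, $d=3$ gives $\nu_2(3!)=1$ versus your claimed bound via $1-\lfloor d/(p-1)\rfloor\ge-\lg_p(d)$, which reads $-2\ge-1$; for $d=8$ it is $-7\ge-3$). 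So the "elementary check" at the end of your argument does not hold, and no termwise bound can close this, because the loss to be absorbed is linear in $d$ and the allowed loss is only logarithmic.

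The missing idea — which you gesture at in your last sentence but defer as "if needed" — is in fact the whole content of the paper's proof: one must exploit cancellation \emph{across} the sum over $i$ within a fixed residue class mod $p$. The paper writes $i=pj+r$, expands both $(pj+r)^{d-1}$ and $(pj+r)^{(p-1)k}-1$ binomially in powers of $pj$ (the latter producing the factor $\binom{(p-1)k}{i}$ with $\nu_p\ge\nu_p(k)-\nu_p(i)$, which plays the role of your $\nu_p(i^{(p-1)k}-1)\ge1+\nu_p(k)$), and then invokes the Sun--Davis congruence \cite[Thm 1.1]{SD} to get
$$\nu_p\Bigl(\tfrac1{d!}\sum_j(-1)^j\tbinom{d}{pj+r}(pj)^{m}\Bigr)\ge\max(0,\,m-\nu_p(d!)),$$
whose $\max(0,\cdot)$ part is precisely the Vandermonde-type cancellation that neutralizes the $1/d!$. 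Even then the conclusion is not immediate: one still needs the case analysis on whether $\nu_p(i)\le\lg_p(d)$, using in the bad case that $\nu_p(i)>\lg_p(d)$ forces $i\ge p^{\lg_p(d)+1}$, so that $i-\nu_p(i)$ dominates $\nu_p(d!)$. Without citing or reproving a result of this strength, the lemma is out of reach by the route you propose.
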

\begin{proof} We have
\begin{eqnarray*}&&|T_p((p-1)k+d-1,d)-T_p(d-1,d)|\\
&=&\sum_{r=1}^{p-1}(-1)^r{\tfrac1{d!}}\sum_j(-1)^j\tbinom d{pj+r}(pj+r)^{d-1}((pj+r)^{(p-1)k}-1)\\
&=&\sum_{r=1}^{p-1}(-1)^r\sum_{i>0,t\ge0}r^{(p-1)k+d-1-i-t}\tbinom{(p-1)k}i\tbinom{d-1}t\tfrac1{d!}\dstyle\sum_j(-1)^j\tbinom d{pj+r}(pj)^{i+t}.
\end{eqnarray*}
Since $\binom{(p-1)k}i=\frac{(p-1)k}i\binom{(p-1)k-1}{i-1}$, we have $\nu_p\binom{(p-1)k}i\ge\nu_p(k)-\nu_p(i)$ for $i>0$.
Also $$\nu_p\bigl({\tfrac1{d!}}\sum_j(-1)^j\tbinom d{pj+r}(pj)^{i+t}\bigr)\ge\max(0,i+t-\nu_p(d!)),$$
with the first part following from \cite[Thm 1.1]{SD}. Thus it will suffice to show
$$\lg_p(d)-\nu_p(i)+\max(0,i+t-\nu_p(d!))\ge0.$$
This is clearly true if $\nu_p(i)\le\lg_p(d)$, while if $\nu_p(i)>\lg_p(d)=\ell$, then
$\nu_p(d!)\le\nu_p((p^{\ell+1}-1)!)=\frac{p^{\ell+1}-1}{p-1}-\ell-1$ and $i-\nu_p(i)\ge p^{\ell+1}-\ell-1$, implying the
lemma.
\end{proof}

The following lemma is easily proved by  induction on $A$.
\begin{lem}\label{lem2} If $A$ and $B$ are positive integers, then
$$\sum_{i=0}^{A-1}\tbinom{i+B-1}i=\tbinom{A+B-1}B.$$
\end{lem}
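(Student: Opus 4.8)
The plan is to prove the identity $\sum_{i=0}^{A-1}\binom{i+B-1}{i}=\binom{A+B-1}{B}$ by induction on $A$, as the paper suggests. First I would check the base case $A=1$: the left side is the single term $\binom{B-1}{0}=1$, and the right side is $\binom{B}{B}=1$, so equality holds. (If one prefers, $A=0$ also works with the empty-sum convention, both sides being $0$, but since $A$ is assumed positive we may start at $A=1$.)

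For the inductive step, assume the identity holds for some $A\ge1$ and all positive integers $B$. Then I would write
$$\sum_{i=0}^{A}\binom{i+B-1}{i}=\left(\sum_{i=0}^{A-1}\binom{i+B-1}{i}\right)+\binom{A+B-1}{A}=\binom{A+B-1}{B}+\binom{A+B-1}{A}.$$
Now I would rewrite the two terms so they combine via Pascal's rule: note $\binom{A+B-1}{B}=\binom{A+B-1}{A-1}$ (since $(A+B-1)-B=A-1$) and $\binom{A+B-1}{A}$ stays as is, so the sum is $\binom{A+B-1}{A-1}+\binom{A+B-1}{A}=\binom{A+B}{A}=\binom{A+B}{B}$, which is exactly the claimed formula with $A$ replaced by $A+1$. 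This completes the induction.

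I do not anticipate any genuine obstacle here; this is a classic "hockey stick" identity and the only point requiring a moment's care is bookkeeping with the symmetry $\binom{n}{k}=\binom{n}{n-k}$ to line the two binomial coefficients up for Pascal's rule. Alternatively, one could give a one-line combinatorial proof — counting multisets or lattice paths — but since the paper explicitly asks for induction on $A$, I would present the argument above.
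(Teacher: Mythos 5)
Your proof is correct and follows exactly the route the paper indicates: the paper states only that the lemma ``is easily proved by induction on $A$,'' and your base case and Pascal's-rule inductive step (after the symmetry $\binom{A+B-1}{B}=\binom{A+B-1}{A-1}$) supply precisely that argument. No issues.
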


Now we can prove Theorem \ref{thm4}. We first prove it when $p=2$, and then indicate the minor changes required when $p$ is odd.
Using Theorem \ref{CM2} at the first step and Theorem \ref{CMthm} at the second, we have
\begin{eqnarray*}&&S(2^ea+d-1,2^eb+d)\\
&\equiv&\sum_{i=2^eb}^{2^ea-1}S(i,2^eb)S(2^ea+d-1-i,d)\mod 2^e\\
&\equiv&\sum_{j=2^{e-1}b}^{2^{e-1}a-1}\binom{j-2^{e-2}b-1}{j-2^{e-1}b}S(2^ea+d-1-2j,d)\mod 2^{e-1}\\
&=&\sum_{k=0}^{2^{e-1}(a-b)-1}\binom{k+2^{e-2}b-1}kS(2^e(a-b)+d-1-2k,d)\\
&=&\sum_{\ell=1}^{2^{e-1}(a-b)}\binom{2^{e-2}(2a-b)-1-\ell}{2^{e-2}b-1}S(2\ell+d-1,d)\\
&=&\sum_{\ell=1}^{2^{e-1}(a-b)}\binom{2^{e-2}(2a-b)-1-\ell}{2^{e-2}b-1}\bigl(T_2(2\ell+d-1,d)\pm\frac1{d!}\sum_j\binom d{2j}(2j)^{2\ell+d-1}\bigr).
\end{eqnarray*}

We have $\nu_2\binom{2^{e-2}(2a-b)-1-\ell}{2^{e-2}b-1}=f(a,b)+e-\nu_2(\ell)$, where $f(a,b)=\nu_2\binom{2a-b-1}{2a-2b}+\nu_2(a-b)-1$.
By \cite[Thm 1.5]{DS},
\begin{equation}\label{DSeq}\nu_2\bigl(\tfrac1{d!}\sum\tbinom d{2j}(2j)^{2\ell+d-1}\bigr)\ge2\ell+\tfrac d2-1.\end{equation}
Thus, using  Lemma \ref{Tdiff} at the first step and Lemma \ref{lem2} at the second,  we obtain
\begin{eqnarray*}&&S(2^ea+d-1,2^eb+d)\\
&\equiv& T_2(d-1,d)\sum_{k=0}^{2^{e-1}(a-b)-1}\binom{k+2^{e-2}b-1}k\mod 2^{\min(e-1,e+f(a,b)-\lg(d))}\\
&=&T_2(d-1,d)\binom{2^{e-1}(a-b)+2^{e-2}b-1}{2^{e-2}b}.
\end{eqnarray*}
Letting $e\to\infty$ yields the claim of Theorem \ref{thm4}. In the congruence, we have also used that $\nu_2(T_2(d-1,d))\ge0$. In fact, by (\ref{DSeq}) and
$S(d-1,d)=0$, we have $\nu_2(T_2(d-1,d))\ge\frac d2-1$. See Table \ref{t2} for some explicit values of $T_2(d-1,d)$.

We now present the minor modifications required when $p$ is odd and $a\equiv b$ mod $(p-1)$. Let $a=b+(p-1)t$. Then
\begin{eqnarray*} &&S(p^ea+d-1,p^eb+d)\\
&\equiv&\sum_{j=0}^{p^et-1}S(p^eb+(p-1)j,p^eb)S(p^e(a-b)-(p-1)j+d-1,d)\\
&\equiv&\sum_{j=0}^{p^et-1}\binom{p^{e-1}b+j-1}jS(p^e(p-1)t-(p-1)j+d-1,d)\\
&=&\sum_{\ell=1}^{p^et}\binom{p^et+p^{e-1}b-\ell-1}{p^{e-1}b-1}S((p-1)\ell+d-1,d)\\
&\equiv&T_p(d-1,d)\sum_{j=0}^{p^et-1}\binom{p^{e-1}b+j-1}j\\
&=&T_p(d-1,d)\binom{p^et+p^{e-1}b-1}{p^{e-1}b}. \end{eqnarray*}

\section{More formulas and numerical values}\label{messysec}
In Theorem \ref{thm3}, we gave a simple formula for $S(p^\infty a+c,p^\infty b+d)$ when $d\le0$. For $d>0$, all values can be written
explicitly using (\ref{stirfor}) and the initial values given in Theorem \ref{thm4}, provided $a\equiv b$ mod $(p-1)$.

First assume $c\ge d-1$. For $i\ge1$, define Stirling-like numbers $S_i(c,d)$ satisfying that for
$d<i$ or $c\le d-1$ the only nonzero value is $S_i(i-1,i)=1$ and satisfying the analogue of (\ref{stirfor}) when $c\ge d$. Note that $S_1(c,d)=S(c,d)$ if $(c,d)\not\in\{(0,0),(0,1)\}$.
The following result is easily obtained. Here we use that the binomial coefficient in Theorem \ref{thm4} equals $\frac p{p-1}\frac{a-b}bS(p^\infty a,p^\infty b)$.
\begin{prop} Assume $a\equiv b\mod (p-1)$. For $d\ge1$, $c\ge d-1$, we have
$$S(p^\infty a+c,p^\infty b+d)=S(p^\infty a,p^\infty b)\bigl(S(c,d)+\sum_{i=1}^d S_i(c,d)T_p(i-1,i)\tfrac p{p-1}\tfrac{a-b}b\bigr).$$   \label{p1}
\end{prop}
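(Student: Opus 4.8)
The plan is to derive Proposition \ref{p1} from Theorem \ref{thm4} and the recursion (\ref{stirfor}) by an induction that tracks how $S(p^\infty a+c,p^\infty b+d)$ builds up from the base cases $c=d-1$. First I would set $c=d-1$: here only the $i=d$ term of the asserted sum survives (since $S_i(d-1,d)=1$ only for $i=d$, and $S(d-1,d)=0$ for $d\ge1$), so the formula reads $S(p^\infty a+d-1,p^\infty b+d)=S(p^\infty a,p^\infty b)\,T_p(d-1,d)\tfrac p{p-1}\tfrac{a-b}{b}$, which is exactly Theorem \ref{thm4} once we substitute the identity $\binom{p^\infty\frac{pa-b}{p-1}-1}{p^\infty b}=\tfrac p{p-1}\tfrac{a-b}{b}S(p^\infty a,p^\infty b)$ noted just before the statement. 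So the base case $c=d-1$ (for all $d\ge 1$ simultaneously) is immediate.

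Next I would run the induction upward in $c$. Passing to the $p$-adic limit in (\ref{stirfor}) with $n=p^\infty a+c$, $k=p^\infty b+d$ gives
\begin{equation*}
S(p^\infty a+c,p^\infty b+d)=(p^\infty b+d)\,S(p^\infty a+c-1,p^\infty b+d)+S(p^\infty a+c-1,p^\infty b+d-1),
\end{equation*}
where I interpret $p^\infty b+d$ as the $p$-adic limit of $p^eb+d$, i.e.\ the $p$-adic integer $d$ (the $p^eb$ contribution vanishes in the limit). Thus the recursion in the limit is simply $S(p^\infty a+c,p^\infty b+d)=d\,S(p^\infty a+c-1,p^\infty b+d)+S(p^\infty a+c-1,p^\infty b+d-1)$. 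Factoring out the common $S(p^\infty a,p^\infty b)$ (which appears in every term by the inductive hypothesis, noting it does not depend on $c$ or $d$), it remains to check that the bracketed quantities $B(c,d):=S(c,d)+\sum_{i=1}^d S_i(c,d)T_p(i-1,i)\tfrac p{p-1}\tfrac{a-b}{b}$ satisfy $B(c,d)=d\,B(c-1,d)+B(c-1,d-1)$ for $c\ge d$. Since $S(c,d)$ satisfies (\ref{stirfor}) and each $S_i(c,d)$ satisfies the analogue of (\ref{stirfor}) in the range $c\ge d$ by definition, this identity holds term-by-term in $i$, so $B$ satisfies the same recursion and the induction closes.

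The one genuinely delicate point — and the step I expect to be the main obstacle — is bookkeeping at the boundary of the range where the $S_i$ recursion is declared to hold. The numbers $S_i(c,d)$ satisfy the Stirling-like recursion only for $c\ge d$, and their values for $c\le d-1$ or $d<i$ are forced to be the single nonzero entry $S_i(i-1,i)=1$; I need to make sure that when the recursion $B(c,d)=dB(c-1,d)+B(c-1,d-1)$ is applied with $c=d$ (so that $B(c-1,d)=B(d-1,d)$ lands exactly on the base row), the contributions match what the formula predicts, and similarly that the $S(c,d)$ versus $S_1(c,d)$ discrepancy at $(c,d)\in\{(0,0),(0,1)\}$ never intrudes because we only use $c\ge d-1\ge 0$ with $(c,d)\ne(0,1)$ when $d\ge 1$. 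Concretely I would verify that for $c=d-1$ the formula's bracket equals $T_p(d-1,d)\tfrac p{p-1}\tfrac{a-b}{b}$ (done above) and that one application of the recursion from $(d,d)$ down to the $(d-1,\cdot)$ row reproduces the $c=d$ case, thereby anchoring the induction correctly; after that, the inductive step for $c>d$ is the clean term-by-term argument. This is the "easily obtained" content the proposition refers to, with the care concentrated entirely in aligning the definitional ranges of $S(c,d)$ and the $S_i(c,d)$.
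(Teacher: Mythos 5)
Your proof is correct and follows exactly the route the paper intends: the paper merely asserts the proposition is ``easily obtained'' from (\ref{stirfor}) and the initial values in Theorem \ref{thm4}, together with the identity $\binom{p^\infty\frac{pa-b}{p-1}-1}{p^\infty b}=\tfrac p{p-1}\tfrac{a-b}{b}S(p^\infty a,p^\infty b)$, and your induction on $c$ with base case $c=d-1$ and the term-by-term verification that the bracket satisfies the same recursion is precisely that argument spelled out. The boundary bookkeeping you flag does check out (in particular $S_d(c-1,d-1)=0$ since its second argument is $d-1<d$, and the $d=1$ step uses Theorem \ref{thm3} for the $S(p^\infty a+c-1,p^\infty b)$ term), so no gap remains.
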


The reader may obtain a better feeling for these numbers from the  table of values of
$S(p^\infty a+c,p^\infty b+d)/S(p^\infty a,p^\infty b)$
in Table \ref{t1}, in which
 $T_i$ denotes $T_p(i-1,i)\frac p{p-1}\frac{a-b}b$.

\begin{table}[H]
\caption{$S(p^\infty a+c,p^\infty b+d)/S(p^\infty a,p^\infty b)$ when $a\equiv b\mod(p-1)$}
\label{t1}
\begin{tabular}{cr|ccccc}
&&&&$d$&&\\
&&$1$&$2$&$3$&$4$&$5$\\
\hline
&$0$&$T_1$&&&&\\
\hline
&$1$&$1+T_1$&$T_2$&&&\\
\hline
$c$&$2$&$1+T_1$&$1+T_1+2T_2$&$T_3$&&\\
\hline
&$3$&$1+T_1$&$3+3T_1+4T_2$&$1+T_1+2T_2$&$T_4$&\\
&&&&$+3T_3$&&\\
\hline
&$4$&$1+T_1$&$7+7T_1+8T_2$&$6+6T_1$&$1+T_1+2T_2$&$T_5$\\
&&&&$+10T_2+9T_3$&$+3T_3+4T_4$&\\[0.5ex]
\hline
&$5$&$1+T_1$&$15+15T_1$&$25+25T_1$&$10+10T_1+18T_2$&$1+T_1+2T_2$\\
&&&$+16T_2$&$+38T_2+27T_3$&$+21T_3+16T_4$&$+3T_3+4T_4$\\
&&&&&&$+5T_5$
\end{tabular}
\end{table}

The first few values of $T_2(d-1,d)$ and $T_3(d-1,d)$ are given in Table \ref{t2}.
\begin{table}[H]
\caption{Some values of $T_2(d-1,d)$ and $T_3(d-1,d)$}
\label{t2}
\begin{tabular}{c|cccccccc}
\hline
$d$&$1$&$2$&$3$&$4$&$5$&$6$&$7$&$8$\\
\hline
$T_2(d-1,d)$&$1$&$-1$&$2$&$-\frac{14}3$&$12$&$-\frac{164}5$&$\frac{4208}5$&$-\frac{86608}{315}$\\[0.5ex]
$T_3(d-1,d)$&$1$&$0$&$-\frac32$&$\frac92$&$-\frac{27}4$&$-\frac{81}{20}$&$\frac{4779}{80}$&$-\frac{15309}{80}$\\[0.5ex]
\hline
\end{tabular}
\end{table}

For $c<d-1$, we use (\ref{stirfor}) to work backwards from $S(p^\infty a+d-1,p^\infty b+d)$, obtaining
\begin{prop} Suppose $a\equiv b\mod(p-1)$. For $k\ge 1$, $d\ge 0$, let $Y(k,d)=S(p^\infty a+d-k,p^\infty b+d)$. Then $Y(1,d)$ is as in Theorem
\ref{thm4}
for $d\ge1$, $Y(k,0)=0$ for $k\ge1$, and, for $k\ge2$, $d\ge1$,
$$Y(k,d)=\bigl(Y(k-1,d)-Y(k-1,d-1)\bigr)/d.$$      \label{Yprop}
\end{prop}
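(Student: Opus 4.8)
The plan is to read off all three assertions directly from the recursion (\ref{stirfor}) together with the established existence of the $p$-adic limits. First I would note that the limit $Y(k,d)=S(p^\infty a+d-k,p^\infty b+d)$ makes sense under the standing hypothesis $a\equiv b\bmod(p-1)$: indeed Theorems \ref{thm2}, \ref{thm3} and \ref{thm4} guarantee existence of the limit for at least one value of the ``$c$''-parameter in each column $d$, and then (\ref{stirfor}) propagates existence to all other values of $c$, as was already observed at the start of Section \ref{sec2}. So each $Y(k,d)$ is a well-defined element of $\Z_p$.

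Next I would dispose of the boundary cases. The statement $Y(1,d)=S(p^\infty a+d-1,p^\infty b+d)$ for $d\ge1$ is exactly the content of Theorem \ref{thm4}; nothing further is needed. For $Y(k,0)=S(p^\infty a-k,p^\infty b)$ with $k\ge1$: this is the $d=0$, $c=-k<0$ instance of Theorem \ref{thm3}, whose first displayed case gives the value $0$ (note $c\ne0$ since $k\ge1$). Hence $Y(k,0)=0$ for all $k\ge1$.

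The main step is the recursion for $k\ge2$, $d\ge1$. I would start from the finite-level identity (\ref{stirfor}) applied with $n=p^ea+d-k+1$ and the lower index $p^eb+d$:
$$S(p^ea+d-k+1,\,p^eb+d)=(p^eb+d)\,S(p^ea+d-k,\,p^eb+d)+S(p^ea+d-k,\,p^eb+d-1).$$
Rearranging,
$$S(p^ea+d-k,\,p^eb+d)=\frac{1}{p^eb+d}\Bigl(S(p^ea+d-(k-1),\,p^eb+d)-S(p^ea+d-(k-1),\,p^eb+d-1)\Bigr),$$
which is legitimate in $\Q_p$ since $d\ge1$ forces $p^eb+d\not\equiv0$ for $e$ large (precisely, $\nu_p(p^eb+d)=\nu_p(d)$ once $p^e>\nu_p(d)$-worth, i.e.\ for $e>\log_p d$). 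Now pass to the $p$-adic limit as $e\to\infty$: the left side tends to $Y(k,d)$, the two terms in the parentheses tend to $Y(k-1,d)$ and $Y(k-1,d-1)$ respectively, and the scalar $1/(p^eb+d)$ tends to $1/d$ since $p^eb\to0$ in $\Z_p$. This yields $Y(k,d)=\bigl(Y(k-1,d)-Y(k-1,d-1)\bigr)/d$, as claimed. (One should check that when $d=1$ the term $Y(k-1,d-1)=Y(k-1,0)$ is the already-computed $0$, so the formula is consistent at the edge; this is immediate.)

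The only point requiring a little care — the ``hard part,'' such as it is — is the interchange of limit and division: one must confirm that dividing by $p^eb+d$ does not destroy convergence, i.e.\ that $\nu_p$ of the bracketed difference is eventually at least $\nu_p(p^eb+d)=\nu_p(d)$, so that the quotient sequence is itself $p$-adically Cauchy. This follows because the bracketed quantity is $(p^eb+d)$ times the left-hand Stirling number, an \emph{integer}, so its $p$-adic valuation is automatically $\ge\nu_p(p^eb+d)$ at each finite level; convergence of the integer sequences $S(p^ea+d-k,p^eb+d)$ (already established) then gives convergence of the quotients to the stated limit. With that observed, the proposition is proved.
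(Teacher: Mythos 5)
Your proposal is correct and follows essentially the same route as the paper, which derives the proposition by rearranging (\ref{stirfor}) to work backwards from the $c=d-1$ column, with the boundary cases supplied by Theorems \ref{thm3} and \ref{thm4}. Your extra care about dividing by $p^eb+d$ and passing to the limit is a sound (if brief in the paper) justification of the same step.
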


We illustrate these values in Table \ref{t3}, where again $T_i$ denotes $T_p(i-1,i)\frac p{p-1}\frac{a-b}b$.
\begin{table}[H]
\caption{$S(p^\infty a+c,p^\infty b+d)/S(p^\infty a,p^\infty b)$ when $a\equiv b\mod(p-1)$}
\label{t3}
\begin{tabular}{cr|cccc}
&&&$d$&&\\
&&$1$&$2$&$3$&$4$\\
\hline
&$-2$&$T_1$&$\tfrac{1}8T_2-\tfrac{7}8T_1$&$\tfrac{1}{81}T_3-\tfrac{65}{648}T_2+\tfrac{85}{216}T_1$&$\tfrac{1}{1024}T_4-\tfrac{781}{82944}T_3+\tfrac{865}{20736}T_2-\tfrac{415}{3456}T_1$\\
%&&&&$+\tfrac{85}{216}T_1$&$+\tfrac{865}{20736}T_2-\tfrac{415}{3456}T_1$\\
\hline
&$-1$&$T_1$&$\tfrac{1}4T_2-\tfrac{3}4T_1$&$\tfrac{1}{27}T_3-\tfrac{19}{108}T_2+\tfrac{11}{36}T_1$&$\tfrac{1}{256}T_4-\tfrac{175}{6912}T_3+\tfrac{115}{1728}T_2-\tfrac{25}{288}T_1$\\
%&&&&$+\tfrac{11}{36}T_1$&$+\tfrac{115}{1728}T_2-\tfrac{25}{288}T_1$\\
\hline
$c$&$0$&$T_1$&$\tfrac{1}2T_2-\tfrac{1}2T_1$&$\tfrac{1}9T_3-\tfrac{5}{18}T_2+\tfrac{1}6T_1$&$\tfrac{1}{64}T_4-\tfrac{37}{576}T_3+\tfrac{13}{144}T_2-\tfrac{1}{24}T_1$\\
%&&&&$+\tfrac{1}6T_1$&$+\tfrac{13}{144}T_2-\tfrac{1}{24}T_1$\\
\hline
&$1$&&$T_2$&$\tfrac{1}3T_3-\tfrac{1}3T_2$&$\tfrac{1}{16}T_4-\tfrac{7}{48}T_3+\tfrac{1}{12}T_2$\\
\hline
&$2$&&&$T_3$&$\tfrac{1}4T_4-\tfrac{1}4T_3$
\end{tabular}
\end{table}

Note that since $S(d-1,d)=0$ and $T_p(n,k)-S(n,k)$ is a sum like that in (\ref{Tdef}) taken over $i\equiv0$ mod $p$, we deduce
that $T_p(d-1,d)=0$ if $1<d<p$, which simplifies these results slightly.

 \section{The case $a\not\equiv b$ mod $(p-1)$}       \label{sec4}
 In this section, we complete the proof of Theorem \ref{thm1}  when $a\not\equiv b$ mod $(p-1)$ by proving the following case.
 \begin{thm}\label{aneb} Suppose $0\le b\le a$ and $d\ge1$. Then the $p$-adic limit of $S(p^{e+1}a-(a-b),p^{e+1}b+d)$ exists as $e\to\infty$.
 \end{thm}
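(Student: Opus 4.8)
The key obstacle is that when $a\not\equiv b$ mod $(p-1)$, Theorem \ref{CMthm} tells us that $\nu_p(S(p^eb+(p-1)j,p^eb))$ may grow with $e$ for the relevant values of $j$, so we cannot simply reuse the clean argument from Section \ref{sec2}. The idea is to set up a recursion in $e$ directly for the sequence $x_e:=S(p^{e+1}a-(a-b),p^{e+1}b+d)$ and show it is $p$-adically Cauchy. I would begin by applying Theorem \ref{CM2} with $n=p^{e+1}a-(a-b)$ and second argument $p^{e+1}b+d$, writing
$$x_e\equiv\sum_{j\ge0}S(p^{e+1}b+(p-1)j,p^{e+1}b)\,S\bigl(p^{e+1}(a-b)-(a-b)-(p-1)j,d\bigr)\pmod{p^{e+1}}.$$
The second factor vanishes unless its first argument is $\ge d$, so $j$ ranges over $0\le j\le \frac{(p^{e+1}-1)(a-b)-d}{p-1}$, a finite sum. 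Because $a\not\equiv b$, the shift $-(a-b)$ is exactly what is needed so that the arguments on the right are compatible mod $(p-1)$ — this is why the theorem is stated with this particular offset rather than a generic $c$.

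Next I would analyze the first factor $S(p^{e+1}b+(p-1)j,p^{e+1}b)$ using Theorem \ref{CMthm} (applied at level $m=e+1$, legitimate since $p^{e+1}b+(p-1)j>p^{e+1}b$ when $j\ge1$, and the $j=0$ term is $S(p^{e+1}b,p^{e+1}b)=1$ which must be handled separately). For $p$ odd this rewrites the first factor as a binomial coefficient $\binom{p^eb+j-1}{j}$ modulo $p^e$ (exactly as in the computation already carried out at the end of Section \ref{sec2}), and for $p=2$ similarly with a halving. Substituting and reindexing, $x_e$ becomes, modulo $p^e$, a sum of the form $\sum_j \binom{p^eb+j-1}{j}\,S\bigl((p-1)(\text{something})+\cdots,d\bigr)$ plus controlled error terms coming from the difference between $S$ and the residue in Theorem \ref{CMthm}. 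The plan is then to split $S(m,d)$ into its ``$T_p$ part'' and the complementary sum over $i\equiv 0\pmod p$, exactly as in the proof of Theorem \ref{thm4}; the complementary piece has $p$-adic valuation growing linearly (by \cite[Thm 1.5]{DS} / \cite[Thm 1.1]{SD}), so it contributes negligibly, while the $T_p$ part is periodic in its first argument mod $(p-1)$ up to an error controlled by Lemma \ref{Tdiff}.

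After these reductions, what remains is a finite sum $\sum_j \binom{p^eb+j-1}{j}\cdot(\text{bounded coefficients periodic in }j)$, which can be collapsed by Lemma \ref{lem2} into a single $p$-adic binomial coefficient $\binom{p^eb+M_e}{p^eb}$-type expression (with $M_e$ an explicit linear function of $e$), whose limit exists by the results of \cite{Dinf}. Comparing the expression for $x_{e+1}$ with that for $x_e$ term-by-term, every discrepancy is divisible by an increasing power of $p$: the Theorem \ref{CMthm} error terms by construction, the $S$-versus-$T_p$ discrepancy by \cite[Thm 1.5]{DS}, and the $T_p$-periodicity discrepancy by Lemma \ref{Tdiff} (whose bound $\nu_p(k)-\lg_p(d)\to\infty$). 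Hence $(x_e)$ is Cauchy in $\Z_p$ and the limit exists. I expect the main obstacle to be bookkeeping the interaction of the three error sources simultaneously while keeping the modulus at which everything is valid tending to infinity — in particular making sure the drop from $p^{e+1}$ to $p^e$ to $p^{e-O(1)}$ incurred at each invocation of Chan–Manna and of Lemma \ref{Tdiff} still leaves an exponent $\to\infty$, which it does since all the losses are $O(\log e)$ at worst.
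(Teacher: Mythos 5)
Your opening moves (Theorem \ref{CM2} followed by Theorem \ref{CMthm}, producing $\sum_j\binom{p^eb+j-1}{j}S((p^{e+1}-1)(a-b)-(p-1)j,d)$ modulo a growing power of $p$) agree with the paper. The gap is in what you do next. Splitting $S(\cdot,d)$ into its $T_p$ part plus the sum over $i\equiv0\pmod p$, and then treating the coefficients $T_p((p-1)\ell,d)$ (with $\ell=R_p(e)(a-b)-j$) as a single constant ``up to an error controlled by Lemma \ref{Tdiff}'', does not work: the bound in Lemma \ref{Tdiff} is $\nu_p(k)-\lg_p(d)$, and $\nu_p(k)=0$ for most $k$, so the per-term error is $O(1)$, not small. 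In the proof of Theorem \ref{thm4} this is rescued by an exact compensation, $\nu_2\binom{2^{e-2}(2a-b)-1-\ell}{2^{e-2}b-1}=f(a,b)+e-\nu_2(\ell)$: the binomial coefficient is highly divisible precisely when the $T_p$-error is not. No such identity holds for $\binom{p^eb+j-1}{j}$ with $j$ ranging up to $R_p(e)(a-b)=\frac{p^{e+1}-1}{p-1}(a-b)$. For instance, with $1\le b<p$ and $p\nmid(a-b)$, take $j=p^e$: then $\nu_p\binom{p^eb+p^e-1}{p^e}=0$ (no carries) while $\ell=R_p(e)(a-b)-p^e\equiv a-b\not\equiv0\pmod p$, so $\nu_p(\ell)=0$ as well, and that term contributes an uncontrolled $p$-adic unit times an $O(1)$ discrepancy. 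The same obstruction defeats your term-by-term comparison of $x_e$ with $x_{e+1}$. Consequently the sum does not collapse via Lemma \ref{lem2} to $T_p(\cdot,d)$ times a single binomial coefficient --- and indeed the paper's limit is not of that form.

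What the paper actually does is substitute the full explicit formula $S(m,d)=\frac{(-1)^d}{d!}\sum_i(-1)^i\binom di i^m$, interchange the order of summation, and evaluate the inner sum over $j$ exactly for each fixed $i$: for $i\not\equiv0\pmod p$ it writes $i^{p-1}=Ap+1$, expands $(Ap+1)^{R_p(e)(a-b)-j}$, and applies the identity $\sum_j\binom{p^eb+j-1}{j}\binom{R_p(e)(a-b)-j}{\ell}=\binom{p^eb+R_p(e)(a-b)}{p^eb+\ell}$ from \cite{Rior}, reducing everything to the convergence of $\binom{p^eb+R_p(e)\a}{p^eb+\ell}$; the terms with $p\mid i$, $i>0$, are killed by the factor $i^{(p-1)\ell}$ via Lemma \ref{plem}. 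Note also that the convergence of $\binom{p^eb+R_p(e)\a}{p^eb+\ell}$ is \emph{not} among the results of \cite{Dinf}, since the top entry is not of the form $p^e\cdot(\text{integer})+\text{constant}$; it is the new Lemma \ref{limlem}, whose proof (stabilization of the $p$-exponent by a digit-splitting argument, and convergence of the unit part via Granville's lemma) is a substantial piece of the argument for which your outline has no substitute.
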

 \noindent Then $\ds\lim_e S(p^{e+1}a+c,p^{e+1}b+d)$ exists for all integers $c$ by induction using (\ref{stirfor}).

 Let $R_p(e)=(p^{e+1}-1)/(p-1)$. The proof of Theorem \ref{aneb} begins with, mod $p^e$,
 \begin{eqnarray*}&&S(p^{e+1} a-(a-b),p^{e+1}b+d)\\
&\equiv&\sum_{j=0}^{R_p(e)(a-b)}S(p^{e+1}b+(p-1)j,p^{e+1}b)S((p^{e+1}-1)(a-b)-(p-1)j,d)\\
&\equiv&\sum_{j=0}^{R_p(e)(a-b)}\binom{p^eb+j-1}j\frac{(-1)^d}{d!}\sum_{i=0}^d(-1)^i\binom dii^{(p^{e+1}-1)(a-b)-(p-1)j}\\
&=&\sum_{i=0}^d(-1)^{i+d}\frac1{d!}\binom di\sum_{j=0}^{R_p(e)(a-b)}\binom{p^eb+j-1}ji^{(p^{e+1}-1)(a-b)-(p-1)j}.\end{eqnarray*}
We show that for each $i$, the limit as $e\to\infty$ of
\begin{equation}\label{trm}\sum_{j=0}^{R_p(e)(a-b)}\binom{p^eb+j-1}ji^{(p^{e+1}-1)(a-b)-(p-1)j}\end{equation}
exists in $\Z_p$. This will complete the proof of the theorem.

If $i\not\equiv0$ mod $p$, write $i^{p-1}=Ap+1$, using Fermat's Little Theorem. Then (\ref{trm}) becomes
\begin{eqnarray*}&&\sum_{\ell=0}^{R_p(e)(a-b)}(Ap)^\ell\sum_{j=0}^{R_p(e)(a-b)}\binom{p^eb+j-1}j\binom{R_p(e)(a-b)-j}\ell\\
&=&\sum_{\ell=0}^{R_p(e)(a-b)}(Ap)^\ell\binom{p^eb+R_p(e)(a-b)}{p^eb+\ell}\end{eqnarray*}
by \cite[p.9(3c)]{Rior}.  Lemma \ref{limlem} says that for each $\ell$, there exists a $p$-adic integer
$$z_\ell:=\lim_{e\to\infty}\binom{p^eb+R_p(e)(a-b)}{p^eb+\ell}.$$
Then ${\ds\sum_{\ell=0}^\infty(Ap)^{\ell}z_\ell}$ is a $p$-adic integer, which is the limit of (\ref{trm}) as $e\to\infty$.

If $i=0$, since $0^0=1$ in (\ref{trm}) and the equations preceding it, (\ref{trm}) becomes
$$\binom{p^eb+R_p(e)(a-b)-1}{p^eb-1}=\frac{p^eb}{p^eb+R_p(e)(a-b)}\binom{p^eb+R_p(e)(a-b)}{p^eb}.$$
Since by the proof of Lemma \ref{limlem} $\nu_p\binom{p^eb+R_p(e)(a-b)}{p^eb}$ is eventually constant, $\binom{p^eb+R_p(e)(a-b)-1}{p^eb-1}\to0$ in $\Z_p$, due to the $p^eb$ factor.

We complete the proof of Theorem \ref{aneb} in the following lemma, which shows that the $p$-adic limit of (\ref{trm}) is 0 when $i\equiv0$ mod $p$ and $i>0$.
\begin{lem}\label{plem} If $0\le j\le R_p(e)(a-b)$, then
$$\nu_p\binom{p^eb+j-1}j+(p^{e+1}-1)(a-b)-(p-1)j\ge e-\log_p(a-b+p)$$
for $e$ sufficiently large.\end{lem}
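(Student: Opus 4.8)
The plan is to estimate the two summands separately and show their sum grows like $e$ up to the stated additive correction. First I would record that for $0\le j\le R_p(e)(a-b)$ we have the elementary bound $\nu_p\binom{p^eb+j-1}{j}\ge \nu_p\bigl((p^eb)!/(p^eb-1)!\,j!\bigr)$-type reasoning; more precisely, writing $\binom{p^eb+j-1}{j}=\frac{1}{j}\binom{p^eb+j-1}{j-1}\cdot(p^eb)$ is not quite right, so instead I would use that $\binom{p^eb+j-1}{j}=\binom{p^eb+j-1}{p^eb-1}$ and apply Kummer's theorem: $\nu_p\binom{p^eb+j-1}{j}$ equals the number of carries when adding $j$ and $p^eb-1$ in base $p$. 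Since $p^eb-1$ has all of its bottom $e$ digits equal to $p-1$, adding any $j$ with $1\le j$ forces carries all the way through those $e$ positions once $j\not\equiv 0$; the upshot is a clean lower bound of the shape $\nu_p\binom{p^eb+j-1}{j}\ge e-\nu_p(j)-O(\log_p(a-b))$, valid for $j>0$, while for $j=0$ the term is handled trivially. I would state this as the first step.

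Next, for the ``arithmetic progression'' part, set $N=(p^{e+1}-1)(a-b)-(p-1)j$, which is $\ge0$ on the range of $j$ and equals $(p-1)\bigl(R_p(e)(a-b)-j\bigr)$. So the quantity to bound from below is
\[
\nu_p\binom{p^eb+j-1}{j}+(p-1)\bigl(R_p(e)(a-b)-j\bigr).
\]
When $j$ is close to its maximum $R_p(e)(a-b)$, the second term is small but then $\nu_p(j)\le \nu_p(R_p(e)(a-b))\le \nu_p(a-b)$ is bounded (since $R_p(e)=(p^{e+1}-1)/(p-1)$ is a $p$-adic unit), so the first term already contributes $\ge e-\nu_p(a-b)-O(\log_p(a-b))$. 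When $j$ is small or $\nu_p(j)$ is large, the Kummer bound may degrade, but then $R_p(e)(a-b)-j$ is large, so the factor $(p-1)(R_p(e)(a-b)-j)$ makes up the deficit: concretely, if $\nu_p(j)=v$ then $j\ge p^v$ unless $j=0$, and we lose at most $v$ from the first term but the crude inequality $R_p(e)(a-b)-j\ge$ (something) combined with $v\le \log_p j\le \log_p(R_p(e)(a-b))$ keeps the sum $\ge e-\log_p(a-b+p)$. I would organize the argument as a case split on the size of $\nu_p(j)$ versus $\log_p\bigl(R_p(e)(a-b)\bigr)$, mirroring the style of the proof of Lemma~\ref{Tdiff}.

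The main obstacle I anticipate is getting the constant exactly right — showing the correction term is precisely $\log_p(a-b+p)$ rather than a sloppier $O(\log_p(a-b)+\log_p p)$. This requires being careful that (i) $R_p(e)(a-b)<p^{e}(a-b)<p^{e+1}$, so $\lg_p$ of the range of $j$ is at most $e$ and $\nu_p(j)$ can be traded against it cleanly, and (ii) the Kummer carry count for adding $j$ to the all-$(p-1)$'s tail of $p^eb-1$ is bounded below by $e-\lg_p(a-b+p)$ uniformly once $e$ exceeds $\lg_p(a-b)$, which is where the ``$e$ sufficiently large'' hypothesis is used. Once Lemma~\ref{plem} is in hand, it shows every term of (\ref{trm}) with $i\equiv 0\bmod p$, $i>0$, has $p$-adic valuation tending to $\infty$ (the exponent of $i$ supplies an extra factor $i^{N}\equiv 0$-ward contribution of valuation $\ge \nu_p(i)\cdot N\ge N$ on top), hence the limit of (\ref{trm}) is $0$ in those cases, completing the proof of Theorem~\ref{aneb} and therefore of Theorem~\ref{thm1}.
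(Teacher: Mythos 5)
Your first step is sound: for $0<j$ with $\nu_p(j)<e$, counting carries in $j+(p^eb-1)$ does give $\nu_p\binom{p^eb+j-1}{j}\ge e-\nu_p(j)$, and this is the same digit-sum information the paper extracts. The gap is in the trade-off. Writing $\ell=R_p(e)(a-b)-j$, what has to be shown is $(p-1)\ell\ge\nu_p(j)-\log_p(a-b+p)$, and nothing in your argument excludes the dangerous case ``$\nu_p(j)$ large while $\ell$ is small.'' Your first case asserts that $j$ being close to its maximum forces $\nu_p(j)\le\nu_p(R_p(e)(a-b))=\nu_p(a-b)$; that is a non sequitur, since $j=M-\ell$ can be far more divisible than $M$. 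Your second case invokes only $\nu_p(j)\le\log_p j\le\log_p(R_p(e)(a-b))\approx e+\log_p(a-b)$, which bounds the loss by roughly $e$ and therefore yields nothing beyond the trivial $\nu_p(\cdot)\ge0$; it cannot recover the lower bound $e-\log_p(a-b+p)$.

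The missing ingredient --- and the heart of the paper's proof --- is the base-$p$ structure of $R_p(e)(a-b)$ itself. Writing $a-b=(p-1)t+\Delta$ with $1\le\Delta\le p-1$, one has $R_p(e)(a-b)\equiv\frac{p^k-1}{p-1}\Delta-t\pmod{p^k}$ for $k\le e$, so $\nu_p(j)=\nu_p(R_p(e)(a-b)-\ell)$ is governed by $\nu_p\bigl(\frac{\Delta}{p-1}(p^s-1)-\ell-t\bigr)$ for the minimal $s$ making this quantity nonnegative; minimality of $s$ forces $\frac{\Delta}{p-1}(p^s-1)-\ell-t\le(p-1)(\ell+t)+p+\Delta$, hence $\nu_p(j)\le\log_p((p-1)(\ell+t)+p+\Delta)$. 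This is \emph{logarithmic} in $\ell$, so the linear term $(p-1)\ell$ dominates, and minimizing $(p-1)\ell-\log_p((p-1)(\ell+t)+p+\Delta)$ at $\ell=0$ produces exactly the constant $-\log_p(a-b+p)$. Without some version of this computation your case split cannot close; with it, your Kummer-style framing and the paper's splitting of the $p$-ary expansion into three blocks become the same argument.
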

\begin{proof} Let $\ell=R_p(e)(a-b)-j$ and $a-b=(p-1)t+\Delta$, $1\le\Delta\le p-1$.
The $p$-exponent of the binomial coefficient becomes
\begin{equation}\label{dpeq}d_p(b-1)+e+d_p((p^{e+1}-1)t+R_p(e)\Delta-\ell)-d_p((p^{e+1}-1)t+R_p(e)\Delta+p^eb-\ell-1).\end{equation}
Choose $s$ minimal so that $\frac{\Delta}{p-1}(p^s-1)-\ell-1-t\ge0$. Then, if $e>s$, the $p$-ary expansion of $(p^{e+1}-1)t+R_p(e)\Delta-\ell$ splits as
$$p^e(pt+\Delta)\quad + \quad p^s\frac{p^{e-s}-1}{p-1}\Delta\quad + \quad \frac{p^s-1}{p-1}\Delta-\ell-t,$$
and there is a similar splitting for the expression at the end of (\ref{dpeq}). We obtain that (\ref{dpeq}) equals
$$e+\nu(b)+\nu\tbinom{pt+b+\Delta}b-\nu_p\bigl(\tfrac{\Delta}{p-1}(p^s-1)-\ell-t\bigr).$$
The expression in the lemma equals this plus $(p-1)\ell$. Since $s$ was minimal, we have
$\frac{\Delta}{p-1}(p^s-1)-\ell-t\le (p-1)(\ell+t)+p+\Delta$, and hence $\nu_p(\frac{\Delta}{p-1}(p^s-1)-\ell-t)\le\log_p((p-1)(\ell+t)+p+\Delta)$.
The smallest value of  $(p-1)\ell-\log_p((p-1)(\ell+t)+p+\Delta)$ occurs when $\ell=0$.  We obtain that the expression in the lemma is $\ge e-\log_p(a-b+p)$.
\end{proof}

The following lemma was referred to above.
\begin{lem}\label{limlem} If $\a$ and $b$ are positive integers and $\ell\ge 0$, then  $$\lim_{e\to\infty}\binom{p^eb+R_p(e)\a}{p^eb+\ell}$$
exists in $\Z_p$.\end{lem}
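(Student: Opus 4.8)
The plan is to prove slightly more than is asked: that $\lim_e\binom{p^eb+R_p(e)\a}{p^eb+\ell}$ exists for \emph{every} integer $\ell$, and equals $0$ when $\ell<0$. Put $N_e=p^eb+R_p(e)\a$ and $B_e(\ell)=\binom{N_e}{p^eb+\ell}$. The argument rests on two ingredients: a $p$-divisibility estimate disposing of $\ell<0$, and a recursion, obtained from $(1+x)^p\equiv1+x^p\bmod p$, expressing $B_{e+1}(\ell)$ for $\ell\ge0$ in terms of the $B_e(j)$ with $j<\ell$; these combine by strong induction on $\ell\ge0$.

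For the divisibility estimate I would write $\a=(p-1)t+\Delta$ with $1\le\Delta\le p-1$ and use Legendre's formula $\nu_pB_e(\ell)=\bigl(d_p(p^eb+\ell)+d_p(R_p(e)\a-\ell)-d_p(N_e)\bigr)/(p-1)$. As in the proof of Lemma \ref{plem}, the base-$p$ expansion of $R_p(e)\a=\a(1+p+\dots+p^e)$ consists, for $e$ large, of a bounded number of ``boundary'' digits near positions $0$ and $e$ together with a long run of a single digit $S\ge1$ in between. Adding or subtracting the fixed number $\ell$, or adding $p^eb$ (which occupies positions $\ge e$), changes only $O(1)$ digits away from that run, so $d_p(R_p(e)\a-\ell)$ and $d_p(N_e)$ are each $Se+O(1)$ and their difference stays bounded. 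When $\ell<0$, however, $d_p(p^eb+\ell)=d_p\bigl(p^e(b-1)+(p^e-|\ell|)\bigr)=(p-1)e+O(1)$, since $p^e-|\ell|=(p^e-1)-(|\ell|-1)$ is a borrow-free subtraction from the string of $e$ digits all equal to $p-1$. Hence $\nu_pB_e(\ell)=e+O(1)\to\infty$, i.e.\ $B_e(\ell)\to0$ for $\ell<0$. (For $\ell\ge0$ one instead gets $d_p(p^eb+\ell)$ eventually constant, so $\nu_pB_e(\ell)$ is eventually constant — a fact I would not need.)

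For the recursion, write $(1+x)^p=1+x^p+pg(x)$ over $\Z[x]$, with $g(0)=0$ and $[x^1]g=1$. Since $N_{e+1}=pN_e+\a$, expanding $(1+x)^{N_{e+1}}=(1+x)^\a\bigl(1+x^p+pg(x)\bigr)^{N_e}$ by the multinomial theorem and reading off the coefficient of $x^{p^{e+1}b+\ell}$, with the $x^p$-exponent written as $p^eb+j$, gives
$$B_{e+1}(\ell)=\sum_{(c,j)}p^c\binom{R_p(e)\a-j}{c}\,\gamma_{c,j}(\ell)\,B_e(j),\qquad \gamma_{c,j}(\ell):=\bigl[x^{\ell-pj}\bigr]\bigl((1+x)^\a g(x)^c\bigr),$$
the sum running over the \emph{finitely many} pairs with $0\le c\le\ell-pj\le\a+c(p-1)$ (finite because $(1+x)^\a g^c$ has no terms below degree $c$). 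In particular every $j$ occurring has $pj\le\ell$, so $j<\ell$ when $\ell\ge1$; and when $\ell=0$ the only term with $j\ge0$ is the $c=j=0$ one, which equals $B_e(0)$ exactly, because $g(0)=0$.

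Now I would induct on $\ell\ge0$. For $\ell=0$ the recursion reads $B_{e+1}(0)=B_e(0)+\sum_{j<0}(\cdots)B_e(j)$, and the latter sum $\to0$ by the divisibility estimate, so $(B_e(0))_e$ is Cauchy in $\Z_p$ and converges. For $\ell\ge1$, each coefficient $p^c\binom{R_p(e)\a-j}{c}\gamma_{c,j}(\ell)$ converges in $\Z_p$ (because $R_p(e)\a\to\a/(1-p)$ and $x\mapsto\binom{x}{c}$ is continuous), and it multiplies a $B_e(j)$ with $j<\ell$, which converges by the inductive hypothesis when $j\ge0$ and $\to0$ when $j<0$; hence $B_{e+1}(\ell)$ converges. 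Together with the reduction in Section \ref{sec4}, this completes Theorems \ref{aneb} and \ref{thm1}. I expect the only genuinely delicate step to be the digit bookkeeping in the second paragraph — showing that $R_p(e)\a$, $R_p(e)\a-\ell$ and $p^eb+R_p(e)\a$ really do share a common bulk digit $S\ge1$, so that the two large digit-sums cancel in Legendre's formula — which requires tracking how the carry entering the bulk from below, and any borrow from subtracting $\ell$, interact with that run; this is the same flavour of computation already used in the proof of Lemma \ref{plem}, and everything else is formal.
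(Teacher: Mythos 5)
Your plan is a genuinely different route from the paper's, but it contains a gap that breaks the induction at its base. The whole argument rests on the claim in your second paragraph that $B_e(\ell)\to0$ for $\ell<0$, and that claim is false. Take $p=2$, $\a=b=1$, $\ell=-1$: then $B_e(-1)=\binom{3\cdot2^{e}-1}{2^{e}-1}$, and since $3\cdot 2^{e}-1=2^{e+1}+(2^{e}-1)$ with the addition of $2^{e+1}$ and $2^{e}-1$ carry-free, Kummer's theorem gives $\nu_2B_e(-1)=0$ for every $e$ (concretely $B_1(-1)=5$, $B_2(-1)=165$, $B_3(-1)=245157$, all odd). The digit bookkeeping fails exactly at the step you flagged as delicate: subtracting $\ell$ (i.e.\ adding $|\ell|$) to $R_p(e)\a$ does \emph{not} change only $O(1)$ digits when the bulk digit of the run equals $p-1$, because the addition can carry through the entire run. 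For $p=2$ the bulk digit is forced to be $1=p-1$, so this is not a corner case: in the example, $d_2(R_2(e)\a-\ell)=d_2(2^{e+1})=1$ while $d_2(N_e)=e+1$, so the difference you need to stay bounded is $-e$. (Some of the specific negative $j$ that actually enter your recursion may still satisfy $B_e(j)\to0$, or be multiplied by coefficients tending to $0$ --- both happen in this very example --- but that requires an analysis you have not done, and your stated blanket justification is simply wrong.)

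There is a second, independent problem: the index set of your recursion is not a fixed finite set. The constraint $c\le\ell-pj\le\a+c(p-1)$ bounds $j$ only in terms of $c$, and $j$ runs down to $-p^eb$ with $c$ correspondingly up to about $\ell+p^{e+1}b$, so the number of terms grows with $e$ and you cannot pass to the limit term by term without a uniform tail estimate. For odd $p$ the factor $p^c$ supplies $\nu_p\ge c(p-2)/(p-1)\to\infty$ and the tail can be controlled, but for $p=2$ one only gets $\nu_2\bigl(2^c\binom{m}{c}\bigr)\ge d_2(c)$, which does not tend to infinity. By contrast, the paper never leaves the range $\ell\ge0$: it shows $\nu_p\binom{p^eb+R_p(e)\a}{p^eb+\ell}$ is eventually constant by the same three-part splitting of $p$-ary expansions used in Lemma \ref{plem}, and then shows the unit parts converge by writing the binomial coefficient as the four-factor product (\ref{4facs}) and comparing each factor with its $(e-1)$-analogue via Granville's Wilson-type congruence. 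If you want to salvage your recursion, you would need to (i) identify precisely which pairs $(c,j)$ with $j<0$ contribute with coefficients not tending to $0$ and prove $B_e(j)\to0$ for exactly those, and (ii) supply a tail bound uniform in $e$; at that point you are doing digit computations at least as heavy as the paper's.
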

The proof of the lemma breaks into two parts: showing that the $p$-exponents are eventually constant, and showing that the unit parts
approach a limit.

The proof that the $p$-exponent is eventually constant is very similar to the proof of Lemma \ref{plem}. Let $\a=(p-1)t+\Delta$ with $1\le\Delta\le p-1$, and choose
$s$ minimal such that $\frac{\Delta}{p-1}(p^s-1)-t-\ell\ge0$. Then the $p$-ary expansions split again into three parts and we obtain that
for $e>s$, the desired $p$-exponent equals $\nu_p\binom{pt+b+\Delta}b+\nu_p\binom{\Delta(p^s-1)/(p-1)-t}\ell$, independent of $e$.

We complete the proof of Lemma \ref{limlem} by showing that, if $\ell<\min(R_p(e-1)\a,p^eb)$, then
\begin{equation}\label{ustmt}\u_p\binom{p^{e-1}b+R_p(e-1)\a}{p^{e-1}b+\ell}\equiv\u_p\binom{p^eb+R_p(e)\a}{p^eb+\ell}\mod p^{e+f(\a,b,\ell)-1},\end{equation}
where $f(\a,b,\ell)=\min(\nu_p(b)-\lg_p(\a),\nu_p(\a)-\lg_p(\ell),\nu_p(b)-\lg_p(\ell),1)$. We write the second binomial coefficient in (\ref{ustmt}) as
\begin{equation}\label{4facs}(-1)^{eb}\frac{(p^eb+R_p(e)\a)!}{(R_p(e)\a)!}\cdot\frac{(R_p(e)\a)!}{(R_p(e)\a-\ell)!}\cdot\frac{(p^eb)!}{(p^eb+\ell)!}\cdot\frac{(-1)^{eb}}{(p^eb)!}.\end{equation}
We show that these four factors are congruent to their $(e-1)$-analogue mod $p^{e+\nu_p(b)-\lg(\a)-1}$, $p^{e+\nu_p(\a)-\lg_p(\ell)-1}$, $p^{e+\nu_p(b)-\lg_p(\ell)-1}$, and
$p^e$, respectively, which will imply the result. For the fourth factor, this was shown in \cite{Dinf}. For the second and third, the claim is clear, since each of the $\ell$ unit factors being multiplied will be congruent to their $(e-1)$-analogue modulo the specified amount.

For the first, we will prove
\begin{equation}\label{Ueq}\u_p\biggl(\frac{(R_p(e)\a+1)\cdots(R_p(e)\a+p^eb)}{(R_p(e-1)\a+1)\cdots(R_p(e-1)\a+p^{e-1}b)}\biggr)\equiv(-1)^b\mod p^{e+\nu_p(b)-\lg_p(\a)-1}.\end{equation}
Since $\u_p(j)=\u_p(pj)$, we may cancel most multiples of $p$ in the numerator with factors in the denominator. Using that $p\cdot R_p(e-1)=R_p(e)-1$, we obtain that the LHS of (\ref{Ueq}) equals $P\u_p(A)/\u_p(B)$, where $P$ is the product of the units in the numerator, $A$ is the product of all $j\equiv0$ mod $p$ which satisfy
$$(R_p(e)-1)\a+p^eb< j\le R_p(e)\a+p^eb,$$
and $B$ is the product of all integers $k$ such that
\begin{equation}\label{range}R_p(e-1)\a+1\le k\le R_p(e-1)\a+\bigl[\tfrac{\a}p\bigr].\end{equation}

Since the mod $p^e$ values of the $p$-adic units in any interval of $p^e$ consecutive integers are just a permutation of the set of positive $p$-adic units less than $p^e$, and by \cite[Lemma 1]{Gr} the product of these is $-1$ mod $p^e$, we obtain $P\equiv(-1)^b\mod p^e$. Thus (\ref{Ueq}) reduces to showing $\u_p(A)/\u_p(B)\equiv 1\mod p^{e+\nu_p(b)-\lg_p(\a)-1}$.

We have
$$\frac{\u_p(A)}{\u_p(B)}=\prod\frac{\u_p(k+p^{e-1}b)}{\u_p(k)},$$
taken over all $k$ satisfying (\ref{range}). We show that if $k$ satisfies (\ref{range}), then
\begin{equation}\label{nuk}\nu_p(k)\le\lg_p(\a).\end{equation}
Then $\u_p(k)\equiv\u_p(k+p^{e-1}b)\mod p^{e+\nu_p(b)-\lg_p(\a)-1}$, establishing the result.

We prove (\ref{nuk}) by showing that it is impossible to have $1\le\a<p^t$, $1\le i\le[\frac{\a}p]$, and
\begin{equation}\label{R}R_p(e-1)\a+i\equiv0\mod p^t.\end{equation}
From (\ref{R}) we deduce $\a\equiv i(p-1)\mod p^t$. But $i(p-1)<\a$, so the only way to satisfy (\ref{R})
would be with $\a=p^t$ and $i=0$, but $\a<p^t$.

\section{Another kind of $p$-adic Stirling number}\label{sec5}
It is well-known (see, e.g., \cite{Kw}) that, if $p$ is any prime and $y\equiv0$ mod $p-1$, then
$$\nu_p(S(x+y,k)-S(x,k))\ge\nu_p(y)+2-\lceil\log_p(k)\rceil,$$
provided that $x$ and $x+y$ are greater than $k$.
This implies that for $0\le i\le p-2$, there is a continuous function  $f_{i,k}:\Z_p\to\Z_p$ such that
$f_{i,k}(m)=S(i+m(p-1),k)$ for all  integers $m$ such that $i+m(p-1)\ge k$. That is, it defines $S(x,k)$ for any $p$-adic integer $x$.
See \cite[p.73]{Cl} for a related discussion.
In \cite{Cl}, the idea of finding $p$-adic integers $z$ which are zeros of these functions (i.e., $f_{i,k}(z)=0$)
is introduced, and its study is continued in \cite{D}.

This is a quite different notion of $p$-adic Stirling number than the one introduced in our Section \ref{intro}.

\def\line{\rule{.6in}{.6pt}}

\end{document}